\definecolor{dark-blue}{rgb}{0,0,0.6}
\definecolor{Purple}{rgb}{0.2,0,0.25}
\newtheorem{thm}{Theorem}[section]
\newtheorem{cor}[thm]{Corollary}
\newtheorem{lem}[thm]{Lemma}
\newtheorem{defin}[thm]{Definition}
\theoremstyle{definition}
\newtheorem{expl}[thm]{Example}
\newtheorem{remark}[thm]{Remark}
\newcommand{\dom}{\textnormal{dom}}
\newcommand{\R}{\mathbb{R}}
\newcommand{\N}{\mathbb{N}}
\newcommand{\argmin}{\textnormal{argmin}}
\newcommand{\sign}{\textnormal{sign}}
\newcommand{\bref}[1]{\textbf{\ref{#1}}} 
\newcommand{\beqref}[1]{\textbf{(\ref{#1})}} 
\begin{document}
\date{June 29, 2016}

\title[A new convergence analysis]{ A new convergence analysis and perturbation resilience of some  accelerated proximal forward-backward algorithms with errors}
\author{Daniel Reem and Alvaro De Pierro}
\address{Instituto de Ci\^encias Matem\'aticas e de Computa\c{c}\~ao (ICMC), University of S\~ao Paulo,  
 S\~ao Carlos, SP, Brazil and Department of Mathematics, The Technion - Israel Institute of Tehcnology, Haifa 3200003, Israel}
\email{dream@tx.technion.ac.il}
\address{Instituto de Ci\^encias Matem\'aticas e de Computa\c{c}\~ao (ICMC), University of S\~ao Paulo, S\~ao Carlos, Avenida Trabalhador S\~ao-carlense, 400 - Centro, 
CEP: 13566-590, S\~ao Carlos, SP, Brazil. }
\email{depierro.alvaro@gmail.com}

\thanks{This work was supported by FAPESP 2013/19504-9. The second author was supported also by CNPq grant  306030/2014-4.}

\keywords{Accelerated method, FISTA, decay rate, error terms, forward-backward algorithm, inexactness, minimization problem, proximal method, superiorization}
\subjclass[2010]{90C25, 90C31, 49K40, 49M27, 90C59}
\maketitle

\begin{abstract}
Many problems in science and engineering involve, as part of their  solution process, the consideration of a separable function which is the sum of two convex functions, one of them possibly non-smooth. Recently a few works have discussed inexact versions of several accelerated proximal methods aiming at solving this minimization problem. This paper shows that inexact versions of a method of Beck and Teboulle (FISTA) preserve, in a Hilbert space setting, the same (non-asymptotic) rate of convergence under some assumptions on the decay rate of the error terms.  The notion of inexactness discussed here seems to be rather simple, but, interestingly, when comparing to related works, closely related decay rates of the errors terms yield closely related  convergence rates. The derivation sheds some light on the somewhat mysterious origin of some parameters which appear in various accelerated methods. A consequence of the analysis is that the accelerated  method is perturbation resilient, making it suitable, in principle, for the superiorization methodology. By taking this into account, we re-examine the  superiorization methodology and significantly extend its scope. 
\end{abstract}

\section{Introduction}\label{sec:Intro}
\subsection{Background: }\label{subsec:Background} Many problems in science and engineering involve, as part of their  solution process, the consideration of the following minimization problem:
\begin{equation}\label{eq:Minimize}
\inf\{F(x): x\in H\}.
\end{equation}
Here $F$ is a separable function of the form $F=f+g$, both $f$ and $g$ are convex functions defined on a real Hilbert space $H$ (with an inner product $\langle\cdot,\cdot\rangle$ and an  induced norm $\|\cdot\|$), the function $g$  is lower  semicontinuous and possibly non-smooth, and $f$ is continuously differentiable and its derivative $f'$ is  Lipschitz continuous with a Lipschitz constant $L(f')\geq 0$. A typical scenario of \beqref{eq:Minimize} appears in linear inverse problems \cite{EnglHankeNeubaue1996rbook,Hansen1998book}. There $H=\R^n$,  $b\in \R^m$, $f(x)=\|Ax-b\|^2$ for some $m\times n$ matrix $A$,  $g(x)=\lambda\|Lx\|^2$, and $L$ is an $m\times n$ matrix (often $L$ is the identity operator, or a diagonal one, or a discrete  approximation of a differential operator). The dimensions $m$ and $n$ are large, e.g., on  the order of magnitude of $10^3$, and $\lambda$ is a fixed positive constant (the regularization parameter). The goal is to estimate the solution $x\in \R^n$ to the linear equation 
\begin{equation}\label{eq:LinearInverse}
Ax=b+u,
\end{equation}
where  $u\in \R^m$ is an unknown noise vector. The solution $x$ frequently represents an image or a signal and the consideration of \beqref{eq:Minimize} instead of \beqref{eq:LinearInverse} is motivated from the fact that \beqref{eq:LinearInverse} is often ill-conditioned. 

The  $\ell_1-\ell_2$  minimization problem (or closely related variations of it) is a variation of  the previous problem which has become popular in machine learning, compress sensing, and signal  processing  \cite{BachJenattonMairalObozinski2012jour,CandesWakinBoyd2008,ChenDonohoSaunders2001jour,Tibshirani1996jour}. Here one frequently takes $g(x)=\lambda\|Lx\|_1$ or $g(x)=\sum_{\nu\in \Pi}\lambda_{\nu}\|x_{\nu}\|_{\infty}$  where $\|x\|_1=\sum_{i=1}^n|x_i|$, $\Pi$ is a vector of  positive integers $\nu$ whose sum is $n$, $\|x_{\nu}\|_{\infty}=\max\{|x_j|: j\in \{1,\ldots,\nu\}\}$, and $\lambda_{\nu}>0$ for all components $\nu$ of $\Pi$. The non-smooth terms are used for increasing sparsity. As a final example we mention the nuclear norm approximation minimization problem which has several versions (and it includes, as a special case, the minimum rank matrix completion problem). In one version $x\in \R^{m\times n}$, $f(x)=\|Ax-b\|^2$, $A:\R^{m\times n}\to \R^{\ell}$ is linear, $b\in \R^{\ell}$, $g(x)=\lambda \|x\|_{\textnormal{nuc}}$, and $\|x\|_{\textnormal{nuc}}$ is the nuclear norm of $x$, i.e., the sum of singular values of $x$ where here $x$ is viewed as a matrix \cite{CaiCandeShen2010jour,MaGoldfarbChen2011jour} (the nuclear norm is  aimed at providing a convex approximation of the matrix rank function). In a second version  $x\in \R^n$, $f:\R^n\to\R$ is a quadratic function, $g(x)=\|Ax-B\|_{\textnormal{nuc}}$, $A:\R^n\to \R^{p\times q}$ is a linear mapping, and $B\in \R^{p\times q}$ is a given matrix \cite{LiuVandenberghe2009jour}. As discussed in the previous references and in some of the references therein, this problem has applications in control and system theory, compressed sensing, computer vision, data recovering, and more. 

Proximal (gradient) methods are among the methods used for solving \beqref{eq:Minimize}. Roughly speaking, they have the form 
\begin{equation}\label{eq:x_kQ_k}
x_k=\argmin_{x\in H}Q_k(x,y_k)
\end{equation}
where $Q_k:H^2\to\R$ is a sum of a two-variable quadratic function and of $g$ and it depends on the iteration $k$, on $f$, and possibly on some other parameters, and $y_k$ depends linearly on previous iterations. When $y_k=x_{k-1}$, a convergence of the iterative sequence $(x_k)_{k=1}^{\infty}$ to a solution $x^*$ of \beqref{eq:Minimize} (assuming such a solution exists) can be established, but unless some strong conditions are  imposed on $F$ and/or other components involved in the problem (e.g., properties of the solution set), both the asymptotic convergence $(x_k\xrightarrow[k\to \infty]{}x^*)$ and the non-asymptotic one ($F(x_k)\xrightarrow[k\to \infty]{}F(x^*))$ can be slow, e.g., $F(x_k)-F(x^*)=O(1/k)$. See, for instance, the discussions in  \cite{BeckTeboulle2009jour} about the ISTA method (Iterative Shrinkage  Tresholding Algorithm), and in \cite[Chapters 4-5]{Byrne2014book}, \cite{CombettesWajs2005jour} about related generalizations and variations. 

The above disadvantage is one of the reasons why accelerated proximal gradient methods are of interest, methods in which a non-asymptotic rate of convergence of the form $F(x_k)-F(x^*)=O(1/k^2)$ can be achieved.  The first significant achievements in this area seem to be the works of  Nemirovski and Yudin  \cite[Chapter 7]{NemirovskyYudin1983book} (1979), and Nemirovski  \cite{Nemirovskiy1982jour} (1982) (with ideas which go back to their 1977 paper  \cite{YudinNemirovskiy1977jour}), for the case of certain smooth functions (i.e., $g\equiv 0$) in a certain class of smooth real reflexive Banach spaces. However, their methods were rather complicated. A breakthrough occurred some time later (1983) by Nesterov  \cite{Nesterov1983jour},  who presented a simple and very practical accelerated method  for the case $g\equiv 0$ and $F$ defined on a Euclidean space. A few years ago there have been additional significant achievements  when the case of $F=f+g$ with a non-smooth $g$ has been discussed in a Euclidean space setting  by Nesterov \cite[Section 4]{Nesterov2007prep} in 2007 and Beck and  Teboulle \cite{BeckTeboulle2009jour} in 2009. Both papers improved independently (and  using different approaches) Nesterov's method \cite{Nesterov1983jour}  using clever modifications. Beck and Teboulle called their method FISTA (Fast Iterative  Shrinkable Tresholding Algorithm).  In the accelerated methods $y_k$ is not $x_{k-1}$ but a linear combinations of several previous iterations. For instance, in FISTA  $y_{k+1}=x_k+\beta_k(x_k-x_{k-1})$ and in Nesterov's method 
$y_{k+1}=\beta_kz_k+(1-\beta_k)x_k$, where $z_k$  is a minimizer of a one-variable 
quadratic function and $\beta_k$ is a positive parameter. For related accelerated 
methods, see e.g.,  \cite{AuslenderTeboulle2006jour,BeckTeboulle2009-2-jour,BelloCruz-Nghia2015prep,GoldfarbMaScheinberg2012jour,GonzagaKaras2013jour,LanLuMonteiro2011jour,NarkissZibulevsky2005prep,Nesterov2004book,Nesterov2005jour,Tseng2008prep,Tseng2010jour,ZibulevskyElad2010jour}. 

A natural question regarding these accelerated methods is whether they are perturbation resilient. In other words, are they stable, i.e., do they still exhibit an accelerated rate of convergence despite  perturbations which may appear in the iterative steps due to noise, computational errors, etc. 
The relevance of this question becomes even more evident when taking into account the fact that  the iterative step in these method involves a proximity operator (see \beqref{eq:x_kQ_k} and   \beqref{eq:p_L})  whose computation is likely to be inexact, since it is itself a solution to a minimization problem. 
Because of that, there has been a rather wide related discussion on inexact proximal forward-backward methods as the following partial list of references shows:  \cite{AlberBurachikIusem1997,BurachikSvaiter1999jour,BurkeQian1999jour,Combettes2004jour,CombettesWajs2005jour,Cominetti1997jour,Eckstein1998jour,Guler1992jour,HeYuan2012jour,HumesSilva2005jour,
IusemOtero2001jour,IusemPennanenSvaiter2003,KangKangJung2015jour,KaplanTichatschke2004jour,ParenteLotitoSolodov2008jour,Rockafellar1976jour,SalzoVilla2012jour,SantosSilva2014jour,SolodovSvaiter1999-2jour,SolodovSvaiter1999-1jour,SolodovSvaiter2000jour,XiaHuang2011jour,Zaslavski2010jour}. In these papers various  notions of inexactness and various settings are discussed (however,  in many cases the methods are non-accelerated, the functions are non-separable, and no convergence estimates are given). 

Another motivation to discuss  inexactness in relation to proximal methods is the recent optimization scheme called ``superiorization''  \cite{Censor2015surv,Davidi2010PhD,Herman2014surv}. In this scheme ones uses carefully selected  perturbations in an  active way in order to obtain solutions which have some good properties,  properties which are measured with respect to some auxiliary cost function (or energy/merit  function). For instance, if one wants to minimize a given function under some constraints, then  instead of  solving this problem which might be too demanding, one may try to find a point which satisfies the constraints but is not necessarily a minimizer. Instead, this point will have a low cost function value and hence it will be superior to other points which satisfy the constraints. See Section \bref{sec:Superiorization} for a more comprehensive discussion and many more related references. 

To the best of  our knowledge, the issue of inexactness related to accelerated proximal forward-backward methods with a separable function $F=f+g$  has been considered only in the following papers:   Devolder et al \cite{DGN2014jour}, 
Jiang et al. \cite{JiangSunToh2012jour}, Monteiro and Svaiter \cite{MonteiroSvaiter2013jour},
 Schmidt et al \cite{SchmidtLe-RouxBach2011prep},  and Villa et al.  \cite{VillaSalzoBaldassarreVerri2013} (the latter is the only work where $H$ is allowed to be infinite dimensional). In these works  \beqref{eq:x_kQ_k} is replaced by 
\begin{equation}\label{eq:x_kQ_kApprox}
x_k\approx \argmin_{x\in H}Q_k(x,y_k),
\end{equation}
where the approximation $\approx$ depends of  the perturbation terms and the notion of  inexactness \beqref{eq:x_kQ_kApprox} depends on the paper. 

In \cite{SchmidtLe-RouxBach2011prep} the inexactness \beqref{eq:x_kQ_kApprox} means
that $\tilde{Q}_k(x_k)\leq \epsilon_k+\tilde{Q}_k(y'_k)$ where $\epsilon_k>0$ is given, $y'_k$ is  a solution to an approximate quadratic minimization problem depending on previous iterations, and $\tilde{Q}_k$ is a perturbed  version of $Q_k$ obtained by perturbing the gradient of the quadratic term of $Q_k$ by a given error vector. 
In  \cite[p. 1046]{JiangSunToh2012jour} the authors consider a different approximation notion. Now \beqref{eq:x_kQ_kApprox} means that $F(x_k)\leq Q_k(x_k)+(\xi_k/(2t_k^2))$ and  $\|A_k^{-0.5}\delta_k\| \leq \epsilon_k/(\sqrt{2}t_k)$, where 
$\delta_k:=f'(y_k)+A_k(x_k-y_k)+\gamma_k$, $A_k:H\to H$ is some positive definite linear operator, $t_k>0$ is a parameter defined recursively (see \beqref{eq:t_k+1} below),  $\xi_k$ and $\epsilon_k$ are given positive parameters, 
and $\gamma_k\in \partial_{\epsilon_k/(2t_k^2)}(x_k)$. Here, as usual, for a given $\epsilon\geq 0$ the  $\epsilon$-subdifferential of $g$ is 
\begin{equation*}
\partial_{\epsilon}g(z)=\{u\in H: g(z)+\langle u,x-z\rangle\leq g(x)+\epsilon,\,\,\,\forall x\in H\}.
\end{equation*}
When $\epsilon_k=0$, then $\delta_k=0$ and \beqref{eq:x_kQ_k} is obtained.

The notion of inexactness of  \cite{VillaSalzoBaldassarreVerri2013} (see 
 \cite[Definition 2.1]{VillaSalzoBaldassarreVerri2013}, 
 \cite[Theorem 4.3]{VillaSalzoBaldassarreVerri2013})  is also related to the $\epsilon$-subdifferential: given an estimate parameter $\epsilon_k>0$, the approximation \beqref{eq:x_kQ_kApprox} holds if and only if $(y_k-\lambda_kf'(y_k)-x_k)/\lambda_k\in \partial_{\epsilon_k^2/(2\lambda_k)}(x_k)$, where $\lambda_k\in (0,2/L(f')]$ is a relaxation parameter. When $\epsilon_k=0$ then \beqref{eq:x_kQ_k} holds due to the optimality condition with $Q_k$. In \cite{MonteiroSvaiter2013jour} there is a discussion and general results which allow  inexactness, e.g., \cite[Sections 3-4]{MonteiroSvaiter2013jour}. However, the application of these results for the setting of a separable function, namely, \cite[Algorithm I]{MonteiroSvaiter2013jour}, is actually without inexactness. 

Finally,  in \cite{DGN2014jour} (see especially Definition  1 and the properties after it, Algorithm 3, and Subsection 8.2) this notion is related to the concept of an inexact first order oracle of a convex function called a $(\delta,L)$-oracle in \cite{DGN2014jour}. Here \beqref{eq:x_kQ_kApprox} means that $x_k=\argmin_{x\in C}\tilde{Q}_k(x,y_k)$ where $C$ is a fixed closed and convex subset of $H$ (the minimization is done over $C$ instead of over $H$) and  $\tilde{Q}_k$ is a quadratic upper bound on $F$  which coincides with $F$ and $y_k$. It is obtained from a modification of $Q_k$ by replacing in  $Q_k$ the coefficient $0.5L(f')$ of the  quadratic term $0.5L(f')\|x-y_k\|^2$ by $0.5L:=0.5(L(f')+(1/(2\delta))M^2)$. Here  $\delta:=\delta_k$ is an error term and $M>0$ is an upper bound on the variation of the subgradients of $g$ over $C$. Because one assumes that $M$ is finite, $C$ usually cannot be unbounded. As noted in \cite[p. 48]{DGN2014jour}, the parameter $\delta$ does not represent an actual accuracy and it can be chosen as small as one wants at the price of having a larger $L$, i.e, a worse quadratic upper bound on $F$.

\subsection{Contribution:} We consider two inexact versions (constant step size rule and  backtracking step size rule) of FISTA and show that FISTA is  perturbation resilient in the function values, namely, it still converges non-asymptotically despite a certain type of perturbations which appear in the algorithmic sequences. The notion of inexactness we consider  is of the form 
\begin{equation*}
x_k=e_k+\argmin_{x\in H}Q_k(x,y_k), 
\end{equation*}
which seems to be rather simple comparing to notions considered in previous mentioned works. Such a notion of inexactness is closely related to notions considered by, for instance, Combettes-Wajs \cite[Theorem 3.4]{CombettesWajs2005jour},  Rockafellar  
\cite[Theorem 1]{Rockafellar1976jour},  and Zaslavski \cite[Theorem 1.2]{Zaslavski2010b-jour} in a different context (non-accelerated proximal methods). Depending on the rate of decay of the magnitude of the perturbations $e_k$ to zero, either the original $O(1/k^2)$ convergence rate is preserved or a slower one is obtained. Interestingly, despite the difference in the notion of inexactness and in the algorithmic schemes, the rate of decay we obtain is closely related to other schemes 
(Corollaries \bref{cor:F=O()}-\bref{cor:Decay e_k} and Remark \bref{rem:DecayLiterature} below). 
We allow the ambient space $H$ to be infinite dimensional, as in  \cite{VillaSalzoBaldassarreVerri2013} (and  \cite{SalzoVilla2012jour,Tseng2008prep}, which, however, do not consider inexactness) but not elsewhere. Unless the perturbations vanish, we require $g$ to be finite. This is a somewhat stronger condition than in several previous works in which $g$ was  allowed to attain the value $\infty$, but it is more general than the original paper of Beck and Teboulle \cite{BeckTeboulle2009jour}. In contrast to previous works on inexact accelerated methods, we allow the case $\inf_{x\in H} F(x)=-\infty$ 
and we do not require the optimal set to be  nonempty (for the exact case, only  \cite{SalzoVilla2012jour,Tseng2008prep,Tseng2010jour,VillaSalzoBaldassarreVerri2013} allow this latter case).

Our analysis is motivated by \cite{BeckTeboulle2009jour}, but a few significant differences exist, partly because of the presence of perturbation terms and the infinite dimensional setting.  An interesting by-product of our analysis  is the derivation, in a systematic way, of the parameters involved in FISTA, parameters whose source seems to be a mystery. For instance, in all previous works which discuss accelerated proximal gradient methods, one  uses the auxiliary variable $y_k$  and assumes an explicit linear dependence of it on previous iterations (see \beqref{eq:x_kQ_k} above and the discussion after it). 
The variable $y_k$ is assumed to depend on positive parameters $t_k,t_{k-1},\ldots$ which satisfy a certain relation (e.g., \beqref{eq:t_k+1} below), but no systematic method is presented which explains the  mysterious origin of both $y_k$ and $t_k$: it seems that initially they were guessed, and in later works they or slight variants of them were used directly without shedding light on their origin. In our analysis we do not impose in advance any form on $y_k$ or $t_k$, but rather derive them  explicitly during the proof (until late  stages in the proof we only require the  existence of $y_k$ satisfying \beqref{eq:x_kQ_k} without any relation to $t_k$ whose existence is even not assumed). After deriving our ideas, we have become aware of the works of Tseng \cite[Proof of Proposition 2]{Tseng2008prep}, \cite[Proof of Theorem 1(b)]{Tseng2010jour} which also shed some light on the  origin of $t_k$ and $y_k$ (in the exact case). However, his analysis  is different (but not entirely different) from ours. 

As said in Subsection \bref{subsec:Background}, the superiorization methodology is one of the reasons to consider the question of perturbation resilience in the context of FISTA. Our final  contribution in this paper is to re-examine this methodology in a comprehensive way and to  significantly extend its  scope. 

\subsection{Paper layout:} Basic assumptions and the formulation of inexact versions of FISTA are given in Section \bref{sec:FISTA-Perturbed}. The convergence of the iterative schemes are presented in Section \bref{sec:Converge}, as well as several corollaries and remarks related to  the convergence theorem (mainly regarding the rate of decay of the error terms and the function values), including a comparison with related papers. The superirization methodology is re-examined and extended in Section \bref{sec:Superiorization}. The proofs of some  auxiliary claims  are given in the appendix (Section \bref{sec:Appendix}). 

\section{Basic assumptions and the formulation of FISTA with perturbations}\label{sec:FISTA-Perturbed}

\subsection{Basic assumptions:} 
From now on $H$ is a given real Hilbert space with an inner product $\langle\cdot,\cdot\rangle$ and an induced norm $\|\cdot\|$. We define $F:H\to (-\infty,\infty]$ by $F:=f+g$ where $f:H\to\R$ is a given convex function whose derivative $f'$ exists and is Lipschitz continuous with a  Lipschitz constant $L(f')\geq 0$, i.e., $\|f'(x)-f'(y)\|_*\leq L\|x-y\|$ for all $x,y\in H$ where $\|\cdot\|_*$ is the norm of the dual $H^*$ of $H$. We assume that $g$ is a given convex and lower semicontinuous function from $H$ to $(-\infty,\infty]$ which is also proper, i.e., its effective  domain $\dom(g):=\{x\in H: g(x)<\infty\}$ is nonempty. 

\subsection{The definition of the accelerated scheme:} The scheme has two versions: a constant  step size version and  a backtracking version. The constant step size version with perturbations  is defined as follows:\\

\noindent{\bf Input:} a positive number $L\geq L(f')$.\\
\noindent{\bf Step 1 (initialization):} arbitrary $x_1\in H$, $y_2\in H$, $t_2\geq 1$.\\
\noindent{\bf Step $k$, $k\geq 2$: } Let $L_k:=L$
\begin{equation}\label{eq:x_kConstStep}
x_k=p_{L_k}(y_k)+e_k, 
\end{equation}
where $e_k\in H$ is the error term,
\begin{equation}\label{eq:p_L}
p_{L_k}(y):=\textnormal{argmin}\{x\in H:\,\, Q_{L_k}(x,y)\}, 
\end{equation}
\begin{equation}\label{eq:Q_L}
Q_{L_k}(x,y):=f(y)+\langle f'(y), x-y\rangle+0.5L_k\|x-y\|^2+g(x), 
\end{equation} 
\begin{equation}\label{eq:y_k+1ConstStep}
y_{k+1}=x_k+\frac{t_k-1}{t_{k+1}}\Big(x_k-x_{k-1}\Big),  
\end{equation}
\begin{equation}\label{eq:t_k+1}
t_{k+1}=\frac{1+\sqrt{1+4t_{k}^2}}{2}.  
\end{equation}

The backtracking step size version with perturbations is defined as follows: \\
\noindent{\bf Input:  $L_1>0$, $\eta>1$}.\\
\noindent{\bf Step 1 (initialization)} arbitrary $x_1\in H$, $y_2\in H$, $t_2\geq 1$.\\
\noindent{\bf Step $k$, $k\geq 2$: } Find the smallest nonnegative integer $i_k$ such 
that with $L_k:=\eta^{i_k}L_{k-1}$ we have 
\begin{equation}\label{eq:L_k_FISTA}
F(p_{L_k}(y_{k}))\leq Q_{L_k}(p_{L_k}(y_k),y_k).
\end{equation}
 Now let 
\begin{equation}\label{eq:x_kBacktrack}
x_k=p_{L_k}(y_k)+e_k, 
\end{equation}
\begin{equation}\label{eq:y_k+1BackTrack}
y_{k+1}=x_k+\frac{t_k-1}{t_{k+1}}\Big(x_k-x_{k-1}\Big)
\end{equation}
where $t_{k+1}$ is defined in \beqref{eq:t_k+1}. 
In both versions the error terms $e_k$ are arbitrary vectors in $H$ satisfying a certain adaptivity condition which is presented later (Subsection \bref{subsec:ConditionErrorTerms} below) and depends on the 
boundedness of $F$ on a certain ball with center $x_k$. As is well-known, the minimizer of $x\mapsto Q_{L_k}(x,y_k)$ exists and is unique \cite[Corollary 11.15]{BauschkeCombettes2011book}. Thus $p_{L_k}(y_k)$ and $x_k$ are well defined.

\begin{remark}\label{rem:L_kFiniteFx_k}
We note that the backtracking step size rule is well-defined because 
according to a well-known finite dimensional result \cite[Lemma 1.2.3, pp. 22-23]{Nesterov2004book}, whose proof in the infinite dimensional case is similar (see Lemma \bref{lem:LipschitzUpperBound} in the appendix), 
if $f:H\to \R$ is continuously differentiable with a Lipschitz constant $L(f')$ of $f'$, then 
\begin{equation}\label{eq:fL}
f(x)\leq f(y)+\langle f'(y), x-y\rangle+0.5L\|x-y\|^2,\quad \forall\, x,y\in H,\, \forall\, L\geq L(f'). 
\end{equation}
By adding $g(x)$ to both sides of \beqref{eq:fL} and using the representation $F=f+g$, we conclude that $F(p_L(y_k))\leq Q_L(p_L(y_k),y_k)$. Since 
for large enough $i_k$ (and, obviously, also in the constant step size rule) we will have $L_k\geq L(f')$, the above implies $F(p_{L_k}(y_k))\leq Q_{L_k}(p_{L_k}(y_k),y_k)$. However,   it may happen that $F(p_{L_k}(y_k))\leq Q_{L_k}(p_{L_k}(y_k),y_k)$ even when $L_k<L(f')$. 

In addition, the  minimization in \beqref{eq:p_L} can be done over the effective domain of $g$. It follows that $Q_L(p_{L_k}(y_k),y_k)$ is always finite for all $k\geq 2$. Therefore,  if $e_k=0$, then $F(x_k)$ is finite because in this case the argmin in \beqref{eq:p_L} is attained at $x_k=p_{L_k}(y_k)$ and from the previous paragraph we  have  $F(x_k)\leq Q_{L_k}(x_k,y_k)$ for all  $k\geq 2$. 
\end{remark}

\begin{remark}
There is a certain delicate point regarding the backtracking step size version: in many cases  computing both sides in $F(p_{L_k}(y_k))\leq Q_{L_k}(p_{L_k}(y_k),y_k)$ 
is not accurate because $p_{L_k}(y_k)$ is known only up to an error $e_k$, namely, one 
actually is able to compute only $x_k$. Thus, unless we have an exact expression for $p_{L_k}(y_k)$,  we actually check whether $F(x_k)\leq Q_{L_k}(x_k,y_k)$. 
The $L_k$ for which $F(x_k)\leq Q_{L_k}(x_k,y_k)$ holds may not satisfy 
$F(p_{L_k}(y_k))\leq Q_{L_k}(p_{L_k}(y_k),y_k)$. So we need to find a simple condition which ensures that if $F(x_k)\leq Q_{L'_k}(x_k,y_k)$ holds for some $L'_k$, then $F(p_{L_k}(y_k))\leq Q_{L_k}(p_{L_k}(y_k),y_k)$ for some explicit positive number $L_k$. 
In the constant step version there is no problem assuming  we can evaluate $L(f')$ from above, since in this case we can take $L_k$ to be any positive upper bound on $L(f')$. The problem is 
with the backtracking step size version, unless $e_k=0$. 
\end{remark}

\begin{remark}
 The construction of $L_k$ and \beqref{eq:fL} imply that 
\begin{equation}\label{eq:tau_rho}
\rho\leq L_k\leq \tau,\quad \forall\, k\geq 1 
\end{equation}
for some positive numbers $\rho\leq \tau$. Indeed, if $L_1<L(f')$,  
then \beqref{eq:tau_rho} holds with $\rho:=L_1$ and $\tau:=\eta L(f')$. 
If $L_1\geq L(f')$, then \beqref{eq:tau_rho} holds with $\rho:=L_1=:\tau$. 
\end{remark}

\subsection{The condition on the error terms}\label{subsec:ConditionErrorTerms}

In the following lines a condition on the error terms will be presented, namely \beqref{eq:e_k}.  For a variation of this condition, see Remark \bref{rem:sigma_k'} below. 
Let $\tilde{x}\in H$ be such that $F(\tilde{x})<\infty$ (there exists such an $\tilde{x}$ since $F$ is proper) and let $s_1>0$ be fixed (for all $k$). Let $\mu>0$ be a fixed upper bound on $\|\tilde{x}\|$.  
Let $(s_k)_{k=2}^{\infty}$ a sequence of arbitrary nonnegative numbers. Denote by $B[x_k,2s_1]$  the closed ball of radius $2s_1$ and center $x_k$. 
If $F$ is bounded on $B[x_k,2s_1]$, then let $m_{k}$ and $M_{k}$ be any lower and upper bounds of $F$ on $B[x_k,2s_1]$, respectively, satisfying $m_k<M_k$. Define 
\begin{equation}\label{eq:Lambda_{k}}
\Lambda_{k}:=\left\{
\begin{array}{lll}
\displaystyle{\frac{M_{k}-m_{k}}{s_1}}, & \textnormal{if}\,\,F\,\,\textnormal{is bounded on}\,\, B[x_k,2s_1],\\
0 & \textnormal{otherwise}.
\end{array} 
\right.
\end{equation}
For all $k\geq 2$, let 
\begin{equation}\label{eq:sigma_k}
\sigma_k:=2t_{k}^2\left((1/L_{k})\Lambda_{k}+\|p_{L_{k}}(y_{k})\|
+\|p_{L_{k-1}}(y_{k-1})\|+4s_1+(1/t_{k})\mu\right),
\end{equation}
where $p_{L_1}(y_1):=x_1$. Then for all $k\geq 2$ the error term $e_k$ is any vector in $H$ which satisfies the following condition: 
\begin{equation}\label{eq:e_k}.
\|e_{k}\|\leq\left\{
\begin{array}{lll}
\min\left\{s_1,s_k/\sigma_k\right\},& 
\textnormal{if}\,F\,\textnormal{is bounded on}\,\,B[x_k,2s_1],\\
0 & \textnormal{otherwise}.
\end{array}\right.
\end{equation}
Here are a few remarks regarding Condition \beqref{eq:e_k}. 
\begin{remark}
First, if  $F$ is  bounded  on the  considered ball, then $g$ must be bounded there because $f$ is always bounded on balls (follows from the fact that $f'$ is Lipschitz continuous). When $H$ is finite dimensional and $g$ is continuous (as happens  in many applications: see e.g., Section \bref{sec:Intro}), then the boundedness of $g$ is automatically ensured because closed balls are compact so classical theorems in analysis can be used. If however $g$ attains the value $\infty$, as happens when $g$ is an indicator function of a closed and  convex subset, then we must require the error term $e_k$ to vanish if $H$ is finite or infinite dimensional. In the infinite dimensional case there is another complication, since then there are exotic cases \cite[Example 7.11, p. 413]{bb1996} in which $g$ may be unbounded on closed balls even if it is continuous and does not  attain the value $\infty$. However, for most  applications (e.g., the infinite dimensional versions of the examples given in Section  \bref{sec:Intro}) this does not happen.  
\end{remark}

\begin{remark} 
Condition \beqref{eq:e_k} implies the dependence of $e_k$ on previous iterations. Hence this condition can be regarded as being adaptive or relative. Conditions in this spirit have been dealt with in the literature in \cite{CensorReem2015jour}. 
In \cite{Eckstein1998jour,OteroIusem2012,IusemOtero2001jour,IusemPennanenSvaiter2003} one can 
find related but more implicit relations. In other places, e.g., \cite{Combettes2004jour,CombettesWajs2005jour} 
there is no such dependence, but rather the error terms should be summable. 
In previous works dealing with inexact accelerated methods in the context of separable functions   \cite{DGN2014jour,JiangSunToh2012jour,SchmidtLe-RouxBach2011prep,VillaSalzoBaldassarreVerri2013}  the error terms are assumed to decay fast enough to zero by imposing a pure numerical quantity  which bounds their magnitude (or the sum of their magnitudes) from above. 
\end{remark}
 
\begin{remark}\label{rem:sigma_k'}

It can be argued that \beqref{eq:e_k} is not explicit enough for two reasons. First, 
$\tilde{x}$ is sometimes a minimizer (see the formulation of Theorem \bref{thm:FISTA-Perturb}  below), so it is not known, and hence its upper bound $\mu$ is unknown. Second, unless it is known that $e_k=0$, we usually cannot compute $p_{L_k}(y_k)$, hence we do not know it, but instead  we know $x_k$. Therefore it is a problem to compute $\sigma_k$ and to estimate $\|e_k\|$. 

Here is  an answer to the first concern (see the paragraphs below for the second concern). If $\tilde{x}$ is a minimizer, then 
it is indeed unknown. However, $\|\tilde{x}\|$ can be estimated frequently. Assume 
for instance that $F$ is coercive, i.e., $\lim_{\|x\|\to\infty}F(x)=\infty$. 
In particular, there is some $\mu>0$ such that $F(x)>F(0)$ for all $\|x\|>\mu$. Since 
$\tilde{x}$ is a minimizer of $F$ we have $F(\tilde{x})\leq F(0)$, and so it must be that $\|\tilde{x}\|\leq \mu$. If for example $F(x)=\|Ax-b\|^2+\|x\|_1$, $x\in H:=\R^n$, $A:\R^n\to \R^{n'}$ is linear, $n,n'\in\N$, $b\in \R^{n'}$, then $F(x)\geq \|x\|_1\geq \|x\|$ for all $x\in H$. As a result, we obtain that $F(x)>F(0)=\|b\|^2$ whenever $\|x\|>\mu:=\|b\|^2$. 

As for the second concern, we suggest three ways to overcome the problem. 
First, it is worth noting that there are 
important situations in which $p_{L_k}(y_k)$ can be computed exactly: one of them is the $\ell_1-\ell_2$ optimization  case, namely, when $H=\R^n$, $f(x)=\|Ax-b\|^2$, $A:\R^n\to\R^{n'}$ is linear with adjoint $A^*:\R^{n'}\to\R^n$, $b\in \R^{n'}$, $g(x)=\lambda\|x\|_1$, since then $Q_{L_k}(x,y_k)=f(y_k)+\langle f'(y_k),x-y_k\rangle+0.5L_k\|x-y_k\|^2+\lambda\|x\|_1$. In this case one has 
$p_{L_k}(y_k)={S}_{\lambda/L_k}(y_k-(2/L_k)A^*(Ay_k-b))$, where ${S}_{\alpha}:\R^n\to\R^n$ 
is the shrinkable operator which maps the vector $x$ to the vector  $S_{\alpha}(x)=(\max\{|x_i|-\alpha,0\}\sign(x_i))_{i=1}^n$ for each  given $\alpha>0$. See, for instance, \cite[pp. 185,188]{BeckTeboulle2009jour},\cite[p. 80, Equation  (21)]{ZibulevskyElad2010jour}. In such cases it is also possible to use the perturbations in an active way, e.g., as a mean for enhancing the speed of convergence or for achieving other purposes, as done in the superiorization scheme (see Section \bref{sec:Superiorization} below).

A second way to overcome the second concern can be used in the frequent case where $p_{L_k}(y_k)$ can be computed only approximately. In this case, given $k\geq 2$ we can replace \beqref{eq:e_k} by the following condition:  
\begin{equation}\label{eq:e_k'}
\|e_{k}\|\leq\left\{
\begin{array}{lll} \min\left\{s_1,s_k/\sigma'_k\right\},& 
\textnormal{if}\,F\,\textnormal{is bounded on}\,\,B[x_k,2s_1],\\
0 & \textnormal{otherwise}.
\end{array}
\right.
\end{equation}
where 
\begin{equation}\label{eq:sigma_k'}
\sigma'_k:=2t_{k}^2\left((1/L_{k})\Lambda_{k}+
\|x_k-((t_{k}-1)/t_{k})x_{k-1}\|+2s_1+(1/t_{k})\mu\right),
\end{equation}
and our convergence results still remain correct due to \beqref{eq:BoundEpsilon_k+1} below. 
For applying \beqref{eq:e_k'} in practice we first approximate $p_{L_k}(y_k)$ 
 up to some arbitrary small parameter $\epsilon>0$. Some examples are 
 mentioned in \cite{VillaSalzoBaldassarreVerri2013} 
 (this follows from   \cite[Proposition 2.5]{SalzoVilla2012jour} and the discussion after  \cite[Definition  2.1]{SalzoVilla2012jour}); see also some of the examples in  \cite{BeckTeboulle2009jour,JiangSunToh2012jour}. What we obtain is a point 
 $x_k$ for which we know that  $\|x_k-p_{L_k}(y_k)\|\leq \epsilon$. Now we check whether 
$\epsilon\leq \min\left\{s_1,s_k/\sigma'_k\right\}$. If yes, then for sure $e_k:=x_k-p_{L_k}(y_k)$ satisfies \beqref{eq:e_k'}. Otherwise, we continue to approximate 
$p_{L_k}(y_k)$ using a smaller parameter, say $0.5\epsilon$, and calling it again $\epsilon$ (of course, $k$ is fixed during this process).  Eventually the inequality 
$\epsilon\leq \min\left\{s_1,s_k/\sigma'_k\right\}$ will be satisfied since 
$\sigma'_k\geq 4t_{k-1}^2s_1>0$. 

The third way to overcome the second concern is to bound from above $\sigma_k$ by some explicit parameter $\tilde{\sigma}_k$. Then we can take any vector $e_k\in H$ satisfying 
\begin{equation}\label{eq:e_kTilde_sigma_k}.
\|e_{k}\|\leq\left\{
\begin{array}{lll}
\min\left\{s_1,s_k/\tilde{\sigma}_k\right\},& 
\textnormal{if}\,F\,\textnormal{is bounded on}\,\,B[x_k,2s_1],\\
0 & \textnormal{otherwise}.
\end{array}\right.
\end{equation}
Such $e_k$ will satisfy \beqref{eq:e_k} too. It remains to estimate $\sigma_k$ from above. 
As follows from \cite[Proposition 11.13, p. 158]{BauschkeCombettes2011book}, the function $u:H\to(-\infty,\infty]$ defined  by  $u(x):=Q_{L_k}(x,y_k)$ for each $x\in H$ is supercoercive (i.e., $\lim_{x\to\infty}u(x)/\|x\|=\infty$) because it is a sum of a quadratic (hence supercoercive) function and a convex, proper and lower  semicontinuous function. Consequently  there exists $\nu_k$ large enough such that for all $x$ satisfying $\|x\|>\nu_k$ we have, in particular, that $u(x)>u(0)$. Since $p_{L_k}(y_k)$ is the minimizer of $u$ we conclude that $u(x)>u(0)\geq u(p_{L_k}(y_k))$ for each $x$ which satisfies $\|x\|>\nu_k$. Therefore $\|p_{L_k}(y_k)\|\leq \nu_k$. Thus $\sigma_k$ is bounded from above by 
\begin{equation}\label{eq:Tilde_sigma_k}
\tilde{\sigma}_k:=2t_{k}^2\left((1/L_{k})\Lambda_{k}+\nu_k
+\nu_{k-1}+4s_1+(1/t_{k})\mu\right).
\end{equation}
\end{remark}

\section{The convergence theorem}\label{sec:Converge}
The proof of the main convergence theorem (Theorem \bref{thm:FISTA-Perturb} below) is based on  several lemmas. The first one is a generalization of \cite[Lemma 2.3]{BeckTeboulle2009jour} to the case where $H$ is infinite dimensional and $g$ is 
lower semicontinuous. A large part of the proof is similar to  \cite[Lemma 2.3]{BeckTeboulle2009jour} and hence we decided to put it in the appendix.

\begin{lem}\label{lem:FB_k}
Suppose that $y\in H$ and $L>0$ satisfy $F(p_L(y))\leq Q_L(p_L(y),y)$ where $Q_L$ is defined in \beqref{eq:Q_L} with $L$ instead of $L_k$. 
Then for all $x\in H$ 
\begin{equation}\label{eq:FB_k}
F(x)-F(p_L(y))\geq 0.5L\|p_L(y)-y\|^2+L\langle p_L(y)-y,y-x\rangle.
\end{equation}
\end{lem}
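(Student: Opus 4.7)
\begin{pf}[Proof plan for Lemma \ref{lem:FB_k}]
The plan is to mimic the short proof of \cite[Lemma 2.3]{BeckTeboulle2009jour}, with two points of care that are forced on us by the infinite dimensional setting and by the fact that $g$ may attain the value $+\infty$: first, we must invoke a subdifferential calculus rule that is valid for a general lower semicontinuous convex $g$; second, we must dispose of the case $g(x)=\infty$ separately (the conclusion is trivial there, since the right hand side of \beqref{eq:FB_k} is finite). From now on fix $x\in \dom(g)$, so that $F(x)<\infty$.

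First, I would write out the three building blocks that together give the inequality.
\textbf{(i)} Convexity of $f$ yields $f(x)\geq f(y)+\langle f'(y),x-y\rangle$.
\textbf{(ii)} The hypothesis $F(p_L(y))\leq Q_L(p_L(y),y)$ unfolds, after cancelling the (finite) $g(p_L(y))$ from both sides, into the descent-type bound $f(p_L(y))\leq f(y)+\langle f'(y),p_L(y)-y\rangle+0.5L\|p_L(y)-y\|^2$.
\textbf{(iii)} Since $p_L(y)$ minimizes the proper convex lower semicontinuous function $u(\cdot):=Q_L(\cdot,y)$ on $H$, Fermat's rule gives $0\in \partial u(p_L(y))$. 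Because the quadratic part $x\mapsto f(y)+\langle f'(y),x-y\rangle+0.5L\|x-y\|^2$ is continuous (indeed $C^1$) on all of $H$, the Moreau--Rockafellar sum rule applies without any constraint qualification, so $\partial u(p_L(y))=f'(y)+L(p_L(y)-y)+\partial g(p_L(y))$. Hence there exists $\gamma\in \partial g(p_L(y))$ with $\gamma=-f'(y)-L(p_L(y)-y)$, and by definition of the subdifferential $g(x)\geq g(p_L(y))+\langle\gamma,x-p_L(y)\rangle$.

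Second, I would plug \textbf{(i)}, \textbf{(ii)}, \textbf{(iii)} into
\begin{equation*}
F(x)-F(p_L(y))=\bigl(f(x)-f(p_L(y))\bigr)+\bigl(g(x)-g(p_L(y))\bigr),
\end{equation*}
giving a lower bound of the form $\langle f'(y),x-p_L(y)\rangle+\langle\gamma,x-p_L(y)\rangle-0.5L\|p_L(y)-y\|^2$. Substituting the explicit expression for $\gamma$ from \textbf{(iii)} collapses the $f'(y)$ terms and leaves $-0.5L\|p_L(y)-y\|^2-L\langle p_L(y)-y,x-p_L(y)\rangle$. A short algebraic rearrangement using $x-p_L(y)=(x-y)-(p_L(y)-y)$ turns this into $0.5L\|p_L(y)-y\|^2+L\langle p_L(y)-y,y-x\rangle$, which is exactly \beqref{eq:FB_k}.

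The only delicate point, and the step I would want to write out most carefully, is the optimality/subdifferential step \textbf{(iii)}: in the finite dimensional, polyhedral setting of \cite{BeckTeboulle2009jour} one can argue directly, but here one must justify the sum rule for $\partial u$, and one must observe that $\gamma\in \partial g(p_L(y))$ lies in $H^*\cong H$ so that the inner product with $x-p_L(y)$ makes sense. Everything else is routine convexity and algebra; once \textbf{(iii)} is in hand, the identity follows by a few lines of cancellation.
\end{pf}
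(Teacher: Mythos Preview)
Your proposal is correct and follows essentially the same route as the paper: the paper isolates your step \textbf{(iii)} as a separate Lemma \bref{lem:Optimality} (Fermat's rule plus the sum rule give $\gamma\in\partial g(p_L(y))$ with $f'(y)+\gamma=L(y-p_L(y))$), and then combines the subgradient inequalities for $f$ at $y$ and for $g$ at $p_L(y)$ with the hypothesis $F(p_L(y))\leq Q_L(p_L(y),y)$ exactly as you outline. The only cosmetic difference is that the paper works with $F(x)-Q_L(p_L(y),y)$ directly rather than first cancelling $g(p_L(y))$ to isolate a bound on $f(p_L(y))$, and it does not single out the case $g(x)=\infty$ explicitly; your treatment of that case is a small improvement in clarity.
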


\begin{remark}
The definition of $L_k$ and Remark \bref{rem:L_kFiniteFx_k} imply that we can use Lemma \bref{lem:FB_k} with $y=y_k$, $L=L_k$, and   an arbitrary $x\in H$. 
\end{remark}
The next lemma is perhaps known and its proof is given for the sake of completeness. 
\begin{lem}\label{lem:LipBounded}
Let $(X,\|\cdot\|)$ be a real normed space and let $G:X\to (-\infty,\infty]$ be convex. Let $B\subset X$ be a closed ball with radius $r_B$ and center $a\in X$ and let $B'$ be any closed  ball containing $B$ with the same center $a$ and with a radius $r_{B'}>r_B$. Suppose that 
there exist real numbers $m_{B'}\leq M_{B'}$ such that $m_{B'}\leq G(x)\leq M_{B'}$ for  all $x\in B'$. Then $G$ is Lipschitz on $B$ with a Lipschitz constant 
\begin{equation}\label{eq:Lambda}
\Lambda:=(M_{B'}-m_{B'})/(r_{B'}-r_B). 
\end{equation}
\end{lem}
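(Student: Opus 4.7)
The strategy is the classical convexity trick: given two points $x,y$ in the smaller ball $B$, extend the segment from $x$ through $y$ by a controlled amount so as to land at a point $z$ in the larger ball $B'$. Then $y$ becomes a convex combination of $x$ and $z$, and the global bounds on $G$ over $B'$ translate, via convexity, into the desired Lipschitz estimate on $B$.

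More concretely, I would first dispose of the trivial case $x=y$. Otherwise, let $\delta:=r_{B'}-r_B>0$ and define
\[
z:=y+\frac{\delta}{\|y-x\|}(y-x).
\]
A triangle inequality estimate shows $\|z-a\|\leq\|z-y\|+\|y-a\|\leq\delta+r_B=r_{B'}$, so $z\in B'$. Next I would solve algebraically for the parameter $t\in(0,1)$ for which $y=(1-t)x+tz$; a direct computation gives
\[
t=\frac{\|y-x\|}{\|y-x\|+\delta}.
\]
Applying convexity of $G$ at this convex combination yields $G(y)\leq(1-t)G(x)+tG(z)$, i.e.\ $G(y)-G(x)\leq t\bigl(G(z)-G(x)\bigr)$. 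Using the two-sided bound $m_{B'}\leq G\leq M_{B'}$ on $B'$ (both $x$ and $z$ lie in $B'$) together with $t\leq\|y-x\|/\delta$, one obtains
\[
G(y)-G(x)\leq\frac{\|y-x\|}{\delta}(M_{B'}-m_{B'})=\Lambda\|y-x\|.
\]
Interchanging the roles of $x$ and $y$ (which is legitimate because both points play the same role in $B$) produces the reverse inequality, and together they give $|G(y)-G(x)|\leq\Lambda\|y-x\|$, as required.

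I do not expect any step to be a genuine obstacle: the only subtlety is being careful that $z$ actually lands in $B'$ (which is where the hypothesis $r_{B'}>r_B$ is used to get $\delta>0$ and a well-defined direction $y-x$), and that $G(z)$ is finite so that the convex combination inequality and the subsequent subtraction are meaningful — this is guaranteed by $G(z)\leq M_{B'}<\infty$. Note that nothing beyond the normed-space structure is needed, so the lemma applies in the infinite dimensional Hilbert space setting required by the rest of the paper.
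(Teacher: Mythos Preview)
Your proof is correct and follows essentially the same approach as the paper: the paper also extends the segment from $x$ through $y$ by the amount $r_{B'}-r_B$ to obtain a point $z\in B'$, writes $y$ as a convex combination of $x$ and $z$ with parameter $\lambda=\|x-y\|/(\|x-y\|+r_{B'}-r_B)$, applies convexity and the bounds on $B'$, and then swaps $x$ and $y$. Your remark that finiteness of $G(z)$ (via $G(z)\leq M_{B'}$) is needed to make the subtraction meaningful is a nice touch the paper leaves implicit.
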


\begin{proof}
The proof is closely related to the proof of \cite[Theorem 2.21, p. 69]{VanTiel1984}. Let $x,y\in B$ be arbitrary. Denote $r:=r_B$, $r':=r_{B'}>r$. 
Let $z:=y+((r'-r)/\|y-x\|)(y-x)$ if $x\neq y$ and $z:=y=x$ otherwise. Then $y=\lambda z+(1-\lambda)x$ where $\lambda:=\|x-y\|/(\|x-y\|+r'-r)$. Since $\lambda\in [0,1]$, the convexity of $G$ implies that  $G(y)\leq \lambda G(z)+(1-\lambda)G(x)$. Since $y\in B$, the definition of $z$ implies that  $\|z-a\|\leq \|y-a\|+r'-r\leq r'$. Therefore $z\in B'$. The above inequalities, 
the definition of $\lambda$, and the fact that $G(x),G(y)\in\R$ (since $x,y\in B'$) imply the inequality 
\begin{equation}
G(y)-G(x)\leq \lambda(G(z)-G(x))\leq \lambda (M_{B'}-m_{B'})\leq \frac{(M_{B'}-m_{B'})\|x-y\|}{r'-r}.
\end{equation} 
By interchanging the role of $x$ and $y$ we obtain  
\begin{equation}
G(x)-G(y)\leq \frac{(M_{B'}-m_{B'})\|y-x\|}{r'-r}.
\end{equation}
Since $x$ and $y$ were arbitrary points in $B$, it follows that  $G$ is Lipschitz on $B$ 
with a Lipschitz constant given in \beqref{eq:Lambda}, as claimed. 
\end{proof}

\begin{remark}
As shown in \cite[Proposition 7.8]{bb1996} for the case where $X$ is Hilbert, boundedness of $G$ on balls (hence on bounded subsets) is equivalent to $G$ being Lipschitz on bounded subsets and also equivalent to the existence and uniform boundedness of the subgradients of $G$ on bounded subsets. In the finite dimensional case all of these conditions always hold as a corollary of \cite[Theorem 5.23, p. 70]{VanTiel1984}, but the counterexample given in 
\cite[Example 7.11, p. 413]{bb1996} shows that in the infinite dimensional case they do not necessary hold. 
\end{remark}
In order to formulate Theorem \bref{thm:FISTA-Perturb} below, we need the following definition. 
\begin{defin}
$F$ is said to be double bounded if it is bounded on bounded subsets of $H$.
\end{defin}

\begin{thm}\label{thm:FISTA-Perturb}
In the framework of Section \bref{sec:FISTA-Perturbed}, suppose that one of the following two  possibilities hold: either we are in the backtracking step size rule, and then the optimal set of $F$ is nonempty and we fix an arbitrary minimizer $\tilde{x}$ in the optimal set, or we are in the constant step size rule, and then we fix an arbitrary $\tilde{x}\in H$ for which $F(\tilde{x})$ is finite. 
Then for all $k\geq 1$ 
\begin{equation}\label{eq:BoundFunctionValues}
F(x_{k+1})-F(\tilde{x})\leq \frac{2\tau\left((2/L_1)t_1(t_1-1)(F(x_1)-F(\tilde{x}))+\|t_1y_2-(t_1-1)x_1-\tilde{x}\|^2+\sum_{j=2}^{k+1}s_{j}\right)}{(k+1)^2}.
\end{equation}
If, in addition, 
\begin{equation}\label{eq:LimSum0}
\lim_{k\to\infty}\frac{\sum_{j=2}^{k+1}s_j}{(k+1)^2}=0,
\end{equation}
then $\lim_{k\to\infty}F(x_k)=\inf_H F$. In particular, the above holds when  
$F$ is double bounded and also when $F$ is not double bounded but 
 $e_k=0$ for all $k\geq 2$.  
\end{thm}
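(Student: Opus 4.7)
The plan is to adapt Beck--Teboulle's FISTA analysis to the infinite-dimensional Hilbert setting while carrying the perturbation terms $e_k$ throughout. First I apply Lemma~\bref{lem:FB_k} at step $k+1$ (with $y := y_{k+1}$, $L := L_{k+1}$), once with $x := x_k$ and once with $x := \tilde{x}$. Writing $p_{k+1} := p_{L_{k+1}}(y_{k+1}) = x_{k+1} - e_{k+1}$, $v_j := F(x_j) - F(\tilde{x})$, and $u_{k+1} := t_{k+1} y_{k+1} - (t_{k+1}-1) x_k - \tilde{x}$, I multiply the first inequality by $t_{k+1}-1$, add the second, multiply the sum by $2t_{k+1}/L_{k+1}$, and complete the square using the identity
\begin{equation*}
\|t_{k+1}(p_{k+1}-y_{k+1}) + u_{k+1}\|^2 = t_{k+1}^2\|p_{k+1}-y_{k+1}\|^2 + 2\langle t_{k+1}(p_{k+1}-y_{k+1}), u_{k+1}\rangle + \|u_{k+1}\|^2
\end{equation*}
to obtain, with $w_{k+1} := t_{k+1} p_{k+1} - (t_{k+1}-1) x_k - \tilde{x}$, the one-step bound
\begin{equation*}
(2/L_{k+1}) t_{k+1}^2 (F(p_{k+1}) - F(\tilde{x})) + \|w_{k+1}\|^2 \le (2/L_{k+1}) t_{k+1}(t_{k+1}-1) v_k + \|u_{k+1}\|^2.
\end{equation*}

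To telescope, I need the coefficient $t_{k+1}^2$ on the left to equal $t_{k+2}(t_{k+2}-1)$ and $\|w_{k+1}\|^2$ to be (essentially) $\|u_{k+2}\|^2$. The first requirement is precisely \beqref{eq:t_k+1}; demanding $u_{k+2} = t_{k+1} x_{k+1} - (t_{k+1}-1) x_k - \tilde{x}$ in the definition of $u_{k+2}$ then uniquely forces $y_{k+2}$ to take the form \beqref{eq:y_k+1BackTrack}. This is the systematic derivation of $y_k$ and $t_k$ promised in the introduction. Since $p_{k+1} = x_{k+1} - e_{k+1}$, one has $w_{k+1} = u_{k+2} - t_{k+1} e_{k+1}$; expanding $\|w_{k+1}\|^2$, substituting $F(p_{k+1}) = F(x_{k+1}) - \epsilon_{k+1}$ with $\epsilon_{k+1} := F(x_{k+1}) - F(p_{k+1})$, dropping the nonnegative $t_{k+1}^2\|e_{k+1}\|^2$, and invoking $t_{k+1}^2 = t_{k+2}(t_{k+2}-1)$ gives
\begin{equation*}
(2/L_{k+1}) t_{k+2}(t_{k+2}-1) v_{k+1} + \|u_{k+2}\|^2 \le (2/L_{k+1}) t_{k+1}(t_{k+1}-1) v_k + \|u_{k+1}\|^2 + R_{k+1},
\end{equation*}
with $R_{k+1} := (2/L_{k+1}) t_{k+1}^2 \epsilon_{k+1} + 2 t_{k+1} \langle u_{k+2}, e_{k+1}\rangle$. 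The non-decrease of $L_k$ combined with $v_{k+1}\ge 0$ (automatic in the backtracking case since $\tilde{x}$ is a minimizer; vacuous in the constant-step case where $L_k$ is constant) chains this into $\Phi_{k+1} \le \Phi_k + R_{k+1}$ for the Lyapunov $\Phi_k := (2/L_k) t_{k+1}(t_{k+1}-1) v_k + \|u_{k+1}\|^2$.

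The key estimate is $R_{k+1} \le s_{k+1}$, obtained from \beqref{eq:e_k}. If $F$ is bounded on $B[x_{k+1}, 2s_1]$, then Lemma~\bref{lem:LipBounded} applied with inner ball $B[x_{k+1}, s_1]$ (which contains $p_{k+1}$ because $\|e_{k+1}\|\le s_1$) and outer ball $B[x_{k+1}, 2s_1]$ yields the Lipschitz constant $\Lambda_{k+1}$ for $F$ on the inner ball, giving $|\epsilon_{k+1}|\le \Lambda_{k+1}\|e_{k+1}\|$ and thus the $\Lambda_{k+1}/L_{k+1}$ summand of $\sigma_{k+1}$. The cross term is handled by $\|u_{k+2}\| \le t_{k+1}\|x_{k+1}\| + (t_{k+1}-1)\|x_k\| + \mu \le t_{k+1}(\|p_{L_{k+1}}(y_{k+1})\| + \|p_{L_k}(y_k)\| + 2s_1) + \mu$ (using $\|x_j\|\le \|p_{L_j}(y_j)\| + s_1$ for $j\in\{k,k+1\}$ and $\|\tilde{x}\|\le \mu$); multiplying by $2t_{k+1}\|e_{k+1}\|$ reproduces the remaining summands of $\sigma_{k+1}$, so $R_{k+1}\le \sigma_{k+1}\|e_{k+1}\|\le s_{k+1}$. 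If $F$ is unbounded on $B[x_{k+1}, 2s_1]$, then \beqref{eq:e_k} forces $e_{k+1}=0$ and $R_{k+1}=0$ trivially.

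Telescoping from $j=1$ to $j=k+1$, together with the standard inductive lower bound on $t_{k+1}$ derived from $t_2\ge 1$ and \beqref{eq:t_k+1} (to control $1/(t_{k+1}(t_{k+1}-1))$ by a multiple of $1/(k+1)^2$) and the uniform $L_k\le \tau$ from \beqref{eq:tau_rho}, delivers \beqref{eq:BoundFunctionValues}. For the limit statement, \beqref{eq:LimSum0} makes the right-hand side of \beqref{eq:BoundFunctionValues} tend to $0$, so $\limsup_k v_k\le 0$; in the backtracking case this is $F(x_k)\to F(\tilde{x})=\inf_H F$ directly, while in the constant-step case taking the infimum over admissible $\tilde{x}$ together with the trivial $F(x_k)\ge \inf_H F$ gives $\lim_k F(x_k)=\inf_H F$. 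The two ``in particular'' clauses follow: double boundedness makes the first branch of \beqref{eq:e_k} always active so the bound is non-vacuous for any choice of $s_k$ satisfying \beqref{eq:LimSum0}, while $e_k\equiv 0$ permits $s_k=0$ so \beqref{eq:LimSum0} holds trivially. The main obstacle is the bookkeeping that reduces the three contributions to $R_{k+1}$ (the $\epsilon_{k+1}$ piece, the cross term, and the dropped quadratic) into the single quantity $s_{k+1}$: the specific four-term form of $\sigma_{k+1}$ in \beqref{eq:sigma_k} is tailor-made to absorb the Lipschitz constant $\Lambda_{k+1}$, the norms $\|p_{L_{k+1}}(y_{k+1})\|$ and $\|p_{L_k}(y_k)\|$, the radius $s_1$, and the bound $\mu$ on $\|\tilde{x}\|$.
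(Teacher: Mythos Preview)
Your proposal is correct and follows essentially the same route as the paper's proof: apply Lemma~\bref{lem:FB_k} twice at step $k+1$, combine with weights $t_{k+1}-1$ and $1$, complete the square via the identity \beqref{eq:VectorialIdentity}, let the telescoping requirement force the recursions \beqref{eq:t_k+1} and \beqref{eq:y_k+1BackTrack}, bound the perturbation residual by $s_{k+1}$ using Lemma~\bref{lem:LipBounded} and the definition of $\sigma_{k+1}$, telescope, and finish with $t_{k+1}\ge (k+1)/2$ and \beqref{eq:tau_rho}. The only cosmetic difference is that the paper first introduces abstract nonnegative parameters $\gamma_k,\delta_k$ (your $t_{k+1}-1$ and $t_{k+1}$) and derives \beqref{eq:t_k+1} and \beqref{eq:y_k+1BackTrack} from the consistency conditions $1+\gamma_k=\delta_k$ and $\delta_{k+1}(\delta_{k+1}-1)=\delta_k^2$, whereas you name $t_{k+1}$ from the start and read off the same constraints a posteriori; your cross-term estimate produces $2s_1$ where the paper carries $4s_1$, which is harmless since $2s_1<4s_1$ still sits inside $\sigma_{k+1}$.
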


\begin{proof}
During the proof all the relevant expressions will be derived. In particular, there will be no use of the specific form of $y_{k}$ until \beqref{eq:y_kExplicit} (only the existence of $y_k\in H$ which satisfies \beqref{eq:x_kConstStep} or \beqref{eq:x_kBacktrack} will be assumed) and  no use of of the specific form of $t_{k+1}$ until  \beqref{eq:t_k=delta_k+1} (the existence of 
$t_k$ will not even be assumed until  \beqref{eq:t_k=delta_k+1}). For the sake of convenience, the proof is divided into several steps. \\

{\bf \noindent Step 1:} Fix an arbitrary $k\geq 1$.  
Let $B_{k+1}:=B[x_{k+1},s_1]$, $B'_{k+1}:=B[x_{k+1},2s_1]$ and $v_{k}:=F(x_{k})-F(\tilde{x})$. 
Either $F$ is bounded on $B'_{k}$ and then $F(x_{k})$ is finite, or $F$ is not bounded there and then $e_{k}=0$ from \beqref{eq:e_k} (or \beqref{eq:e_k'}). This, together with Remark  \bref{rem:L_kFiniteFx_k}, implies that if in addition $k\geq 2$, then also in this case $F(x_{k})$ is finite. Since we  always assume that  $F(\tilde{x})$ is finite it follows that $v_{k+1}$ is finite for all $k\in\N$. 

From now until the last paragraph of this step (excluding) assume that $F$ is bounded on $B'_{k+1}$. At the end of the step we will deal with the second possibility. Let $m_{k+1}$ and $M_{k+1}$, $m_{k+1}<M_{k+1}$ be any lower and upper bounds of $F$ on $B'_{k+1}$, respectively,  and let  $\Lambda_{k+1}:=(M_{k+1}-m_{k+1})/s_1$. 
Since $\|e_{k+1}\|\leq s_1<2s_1$ and since Lemma \bref{lem:LipBounded} implies that  $F$ is Lipschitz on $B'_{k+1}$ with a Lipschitz constant $\Lambda_{k+1}$, 
we have 
\begin{equation}\label{eq:Lambda_e}
\Lambda_{k+1}\|e_{k+1}\|-F(x_{k+1})\geq -F(x_{k+1}-e_{k+1}). 
\end{equation}
Thus, 
by substituting $x=x_k$, $y=y_{k+1}$, $L=L_{k+1}$ in \beqref{eq:FB_k},  
 using the definition of $v_k$, using \beqref{eq:x_kBacktrack} (or \beqref{eq:x_kConstStep}), Lemma \bref{lem:FB_k}, and using \beqref{eq:Lambda_e},  we obtain 
\begin{multline}\label{eq:v_kv_k+1}
2(v_k+\Lambda_{k+1}\|e_{k+1}\|-v_{k+1})/L_{k+1}\geq 2(F(x_k)-F(x_{k+1}-e_{k+1}))/L_{k+1}\\
\geq\|x_{k+1}-e_{k+1}-y_{k+1}\|^2+2\langle x_{k+1}-e_{k+1}-y_{k+1},y_{k+1}-x_k\rangle.
\end{multline}
By substituting  $x=\tilde{x}$, $y=y_{k+1}$, $L=L_{k+1}$ in \beqref{eq:FB_k} and using \beqref{eq:x_kBacktrack} (or \beqref{eq:x_kConstStep}), Lemma \bref{lem:FB_k} and  \beqref{eq:Lambda_e}, we obtain 
\begin{multline}\label{eq:-v_k+1}
2(\Lambda_{k+1}\|e_{k+1}\|-v_{k+1})/L_{k+1}\\
\geq 2(F(\tilde{x})-F(x_{k+1}-e_{k+1}))/L_{k+1}\geq\|x_{k+1}-e_{k+1}-y_{k+1}\|^2+2\langle x_{k+1}-e_{k+1}-y_{k+1},y_{k+1}-\tilde{x}\rangle.
\end{multline}
Now we multiply \beqref{eq:v_kv_k+1} by a nonnegative  number $\gamma_k$ (to be determined 
later) and add the resulting inequality to \beqref{eq:-v_k+1}. We have 
\begin{multline}\label{eq:gamma_k}
(2/L_{k+1})(\gamma_kv_k-(1+\gamma_k)v_{k+1})+(2/L_{k+1})(1+\gamma_k)\Lambda_{k+1}\|e_{k+1}\|\\
\geq (\gamma_k+1)\|x_{k+1}-e_{k+1}-y_{k+1}\|^2+2\langle x_{k+1}-e_{k+1}-y_{k+1},
\gamma_k(y_{k+1}-x_k)+y_{k+1}-\tilde{x}\rangle\\
=(\gamma_k+1)\|x_{k+1}-y_{k+1}\|^2+(\gamma_k+1)\|e_{k+1}\|^2
-2\langle e_{k+1},(\gamma_k+1)(x_{k+1}-y_{k+1})\rangle\\
+2\langle x_{k+1}-y_{k+1},\gamma_k(y_{k+1}-x_k)+y_{k+1}-\tilde{x}\rangle
-2\langle e_{k+1},\gamma_k(y_{k+1}-x_k)+y_{k+1}-\tilde{x}\rangle.
\end{multline}
Now we multiply \beqref{eq:gamma_k} by some nonnegative number $\delta_k$ (to be determined later).  We have 
\begin{multline}\label{eq:left_right}
(2/L_{k+1})(\delta_k\gamma_kv_k-\delta_k(1+\gamma_k)v_{k+1})
+(2/L_{k+1})\delta_k(1+\gamma_k)\Lambda_{k+1}\|e_{k+1}\|\\
\geq \|(\delta_k(1+\gamma_k))^{0.5}(x_{k+1}-y_{k+1})\|^2+2\delta_k\left\langle x_{k+1}-y_{k+1},(1+\gamma_k)y_{k+1}-(\gamma_kx_k+\tilde{x})\right\rangle\\
+\delta_k(1+\gamma_k)\|e_{k+1}\|^2-2\delta_k\langle e_{k+1}, (1+\gamma_k)x_{k+1}-(\gamma_k x_k+\tilde{x})\rangle.
\end{multline}
So far we assumed that $F$ is bounded on $B'_{k+1}$. However, if it is not bounded there, then 
according to \beqref{eq:e_k} or \beqref{eq:e_k'} we have $e_{k+1}=0$, and then \beqref{eq:Lambda_e}-\beqref{eq:left_right} still hold (with arbitrary $\Lambda_{k+1}\in\R$), again because of Lemma \bref{lem:FB_k} and the same simple algebra. The above inequalities also hold, trivially, when $v_k=\infty$ (can happen only when $k=1$). \\

{\bf \noindent Step 2:} 
In order to reach useful expressions, we want to use the simple vectorial identity 
\begin{equation}\label{eq:VectorialIdentity}
\|b-a\|^2+2\langle b-a,a-c\rangle=\|b-c\|^2-\|a-c\|^2,
\end{equation}
which seems related to the right hand side of \beqref{eq:left_right} (if we ignore for a moment the terms involving perturbations). In order to use it, we impose additional assumptions on the sequences $(\gamma_k)_{k=1}^{\infty}$ 
and $(\delta_k)_{k=1}^{\infty}$ (in addition to non-negativity): 
\begin{equation}\label{eq:1+gamma_k=delta_k}
1+\gamma_k=(\delta_k(1+\gamma_k))^{0.5}=\delta_k,\quad \forall k\geq 1.
\end{equation}
Fortunately, these three equations are consistent  and once we assume \beqref{eq:1+gamma_k=delta_k}, substitute 
\begin{equation}
a=\delta_k y_{k+1},\quad b=\delta_k x_{k+1},\quad 
c=\gamma_k x_k+\tilde{x}
\end{equation}
in \beqref{eq:VectorialIdentity}, use the Cauchy-Schwarz inequality, and use \beqref{eq:left_right}, we obtain 
\begin{multline}\label{eq:abc}
(2/L_{k+1})(\delta_k(\delta_k-1)v_k-\delta_k^2v_{k+1})
-\delta_k^2\|e_{k+1}\|^2\\
+(2/L_{k+1})\delta_k^2\Lambda_{k+1}\|e_{k+1}\|
+2\delta_k^2\|e_{k+1}\|\|x_{k+1}-((\delta_k-1)/\delta_k)x_k-\tilde{x}/\delta_k\|\\
\geq \|\delta_k x_{k+1}-(\gamma_k x_k+\tilde{x})\|^2-\|\delta_k y_{k+1}-(\gamma_k x_k+\tilde{x})\|^2.
\end{multline}
With the notation
\begin{equation}\label{eq:epsilon_k+1}
\epsilon_{k+1}:=2\delta_k^2\|e_{k+1}\|((\Lambda_{k+1}/L_{k+1})
+\|x_{k+1}-((\delta_k-1)/\delta_k)x_k-\tilde{x}/\delta_k\|)
\end{equation}
and the fact that $-\delta_k^2\|e_{k+1}\|^2\leq 0$ we obtain the inequality 
\begin{equation}\label{eq:abc-epsilon}
(2/L_{k+1})(\delta_k(\delta_k-1)v_k-\delta_k^2v_{k+1})
+\epsilon_{k+1}
\geq \|\delta_k x_{k+1}-(\gamma_k x_k+\tilde{x})\|^2-\|\delta_k y_{k+1}-(\gamma_k x_k+\tilde{x})\|^2.
\end{equation}
Now there are two possibilities: if we are in the constant step size rule, then  $L_{k+1}=L_k$ and we obtain from  \beqref{eq:abc-epsilon} that 
\begin{equation} \label{eq:almost_a_k}
(2/L_{k})\delta_k(\delta_k-1)v_k-(2/L_{k+1})\delta_k^2v_{k+1}
+\epsilon_{k+1}
\geq \|\delta_k x_{k+1}-(\gamma_k x_k+\tilde{x})\|^2-\|\delta_k y_{k+1}-(\gamma_k x_k+\tilde{x})\|^2.
\end{equation}
If we are in the backtracking step size rule, then $F(x_k)\geq F(\tilde{x})$ and hence $v_k\geq 0$. 
Since also $\delta_k-1=\gamma_k\geq 0$ and $L_{k+1}\geq L_k$, we obtain \beqref{eq:almost_a_k} again from \beqref{eq:abc-epsilon}. \\

{\bf \noindent Step 3:} 
We want to represent the non perturbed term in the left hand side of \beqref{eq:almost_a_k} as 
\begin{equation}\label{eq:a_k-a_k+1}
a_{k}-a_{k+1},
\end{equation}
for some sequence of positive numbers $(a_k)_{k=1}^{\infty}$, and to represent the right hand side of \beqref{eq:almost_a_k} as 
\begin{equation}\label{eq:w_k+1w_k}
\|w_{k+1}\|^2-\|w_k\|^2.
\end{equation}
for a sequence of vectors $(w_k)_{k=1}^{\infty}$. The reason for doing this will become 
clear later (see  \beqref{eq:DecreaseSummable} and the discussion after it). 
For obtaining \beqref{eq:a_k-a_k+1} we impose the condition
\begin{equation}\label{eq:delta_kCond}
\delta_{k+1}(\delta_{k+1}-1)=\delta_k^2,\quad \forall k\geq 1.
\end{equation}
It leads to \beqref{eq:a_k-a_k+1} with
\begin{equation}\label{eq:a_k}
a_k:=2\delta_{k}(\delta_k-1)v_k/L_{k}. 
\end{equation}

{\bf \noindent Step 4:}
For obtaining \beqref{eq:w_k+1w_k} we impose some conditions on the sequence $(y_{k})_{k=2}^{\infty}$ (so far we only assumed the existence of $y_k\in H$ satisfying \beqref{eq:x_kConstStep} or \beqref{eq:x_kBacktrack} but not its form). The condition is that with 
\begin{equation}\label{eq:w_k}
w_{k}:=\delta_k y_{k+1}-(\gamma_k x_k+\tilde{x}),  \quad \forall k\geq 1
\end{equation}
we will have 
\begin{equation}\label{eq:w_k+1}
w_{k+1}=\delta_k x_{k+1}-(\gamma_k x_k+\tilde{x}), \quad \forall k\geq 1.
\end{equation}
Thus, from  \beqref{eq:1+gamma_k=delta_k},\beqref{eq:w_k},\beqref{eq:w_k+1}, 
\begin{multline}\label{eq:y_kExplicit}
y_{k+2}=\frac{w_{k+1}+(\gamma_{k+1} x_{k+1}+\tilde{x})}{\delta_{k+1}}
=\frac{(\delta_{k}x_{k+1}-(\gamma_{k} x_{k}+\tilde{x}))+\gamma_{k+1}x_{k+1}+\tilde{x}}{\delta_{k+1}}\\
=\frac{(\delta_{k+1}+\delta_k-1)x_{k+1}
-(\delta_{k}-1)x_{k}}{\delta_{k+1}},  \quad \forall k\geq 1. 
\end{multline}

{\bf \noindent Step 5:} 
We still need to find $\delta_k$ and $\gamma_k$. After 
 solving the quadratic equation \beqref{eq:delta_kCond} for $\delta_{k+1}$ and taking into account the asumption $\delta_k\geq 0$  for all $k\geq 1$, we obtain 
\begin{equation}\label{eq:delta_k}
\delta_{k+1}=\frac{1+\sqrt{1+4\delta_k^2}}{2},\quad \forall k\geq 1.
\end{equation}
The only restriction on $\delta_1$ is that $\delta_1\geq 1$ so that $\gamma_1\geq 0$ because of \beqref{eq:1+gamma_k=delta_k}. 
There is no restriction on $y_2$. Once we choose $y_2$ and $\delta_1$ we obtain $\gamma_k$ from \beqref{eq:1+gamma_k=delta_k}  
and see that indeed $\gamma_k\geq 0$ and $\delta_k\geq 1$ for all $k$. The equalities and inequalities mentioned earlier indeed hold from the construction of $\delta_k$. 
By denoting 
\begin{equation}\label{eq:t_k=delta_k+1}
t_k:=\delta_{k-1},\quad \forall\,k\geq 2
\end{equation}
we derive the expression mentioned in \beqref{eq:t_k+1}. From \beqref{eq:y_kExplicit} 
we derive the specific form \beqref{eq:y_k+1ConstStep} (and \beqref{eq:y_k+1BackTrack}) of $y_{k+1}$.  \\
 
{\bf \noindent Step 6:} 
Now,  by induction we obtain from  \beqref{eq:epsilon_k+1} and 
\beqref{eq:almost_a_k}-\beqref{eq:w_k} 
that 
\begin{equation}\label{eq:DecreasePerturbe}
a_k+\|w_k\|^2+\epsilon_{k+1}\geq a_{k+1}+\|w_{k+1}\|^2, \quad\forall k\geq 1.
\end{equation}
This implies that the sequence $(a_k+\|w_k\|^2)_{k=1}^{\infty}$ of real numbers 
is decreasing up to a small perturbation. From the inequality $\|e_{k+1}\|\leq s_1$, 
\beqref{eq:x_kBacktrack} (or \beqref{eq:x_kConstStep}), the triangle inequality, \beqref{eq:e_k}, the assumption that $\|\tilde{x}\|\leq \mu$,  \beqref{eq:epsilon_k+1}, and \beqref{eq:t_k=delta_k+1} it follows that for all $k\geq 1$ 
\begin{multline}\label{eq:BoundEpsilon_k+1}
\epsilon_{k+1}=
2 t_{k+1}^2\|e_{k+1}\|\left((\Lambda_{k+1}/L_{k+1}) 
+\|x_{k+1}-((t_{k+1}-1)/t_{k+1})x_k-(1/t_{k+1})\tilde{x}\|\right)\\
\leq 2 t_{k+1}^2\|e_{k+1}\|\left((\Lambda_{k+1}/L_{k+1})+
\|x_{k+1}-((t_{k+1}-1)/t_{k+1})x_k\|+(1/t_{k+1})\mu+2s_1\right)\\
\leq 2 t_{k+1}^2\|e_{k+1}\|\left((\Lambda_{k+1}/L_{k+1})+
\|x_{k+1}\|+\|x_k\|+2s_1+(1/t_{k+1})\mu\right)\\
\leq 2 t_{k+1}^2\|e_{k+1}\|\left((\Lambda_{k+1}/L_{k+1})+\|p_{L_{k+1}}(y_{k+1})\|+s_1
+\|p_{L_{k}}(y_{k})\|+s_1+2s_1+(1/t_{k+1})\mu\right)\\
=\|e_{k+1}\|\sigma_{k+1}\leq s_{k+1}.
\end{multline}
If \beqref{eq:e_k'} holds instead of \beqref{eq:e_k}, then similar considerations 
show that $\epsilon_{k+1}\leq s_{k+1}$ (the third line in \beqref{eq:BoundEpsilon_k+1} is 
replaced by $\|e_{k+1}\|\sigma_{k+1}'\leq s_{k+1}$). Therefore, using \beqref{eq:DecreasePerturbe},
\begin{multline}\label{eq:DecreaseSummable}
a_1+\|w_1\|^2+\sum_{j=2}^{k+1}s_{j}\geq 
a_1+\|w_1\|^2+\sum_{j=1}^{k}\epsilon_{j+1} \geq a_{k+1}+\|w_{k+1}\|^2\geq a_{k+1},\quad \forall k\geq 1.
\end{multline}
The above implies, using \beqref{eq:a_k}, that for all $k\geq 1$ 
\begin{equation}\label{eq:a_1w_1a_k+1}
a_1+\|w_1\|^2+\sum_{j=2}^{k+1}s_{j}\geq a_{k+1}=2t_{k+2}(t_{k+2}-1)(F(x_{k+1})-F(\tilde{x}))/L_{k+1}.
\end{equation}
From \beqref{eq:delta_kCond},\beqref{eq:t_k=delta_k+1}, and \beqref{eq:a_1w_1a_k+1} 
it follows that for all $k\geq 1$ 
\begin{multline}\label{eq:Fx_kt_kL_k+1}
F(x_{k+1})-F(\tilde{x})\leq \frac{L_{k+1}(a_1+\|w_1\|^2+\sum_{j=2}^{k+1}s_{j})}{2t_{k+1}^2}\\
=\frac{L_{k+1}\left((2/L_1)t_2(t_2-1)(F(x_1)-F(\tilde{x}))+
\|t_2y_2-((t_2-1)x_1+\tilde{x})\|^2+\sum_{j=2}^{k+1}s_{j}\right)}
{2t_{k+1}^2}.
\end{multline}

{\bf \noindent Step 7:} From \beqref{eq:delta_k},\beqref{eq:t_k=delta_k+1}, and simple induction  it follows that 
\begin{equation}\label{eq:delta_k geq k}
t_{k+1}=\delta_k\geq 0.5(k+1),\quad \forall k\geq 1.
\end{equation}
This inequality, \beqref{eq:Fx_kt_kL_k+1} and \beqref{eq:tau_rho} yield
\begin{multline}\label{eq:O(1/k^2)}
F(x_{k+1})-F(\tilde{x})\leq
\frac{L_{k+1}(a_1+\|w_1\|^2+\sum_{j=2}^{k+1}s_{j})}{2t_{k+1}^2}\\
\leq\frac{2\tau\left((2/L_1)t_2(t_2-1)(F(x_1)-F(\tilde{x}))+
\|t_2y_2-((t_2-1)x_1+\tilde{x})\|^2+\sum_{j=2}^{k+1}s_{j}\right)}
{(k+1)^2}.
\end{multline}

{\bf \noindent Step 8:} 
It remains to show that under the assumption \beqref{eq:LimSum0} we have  $\lim_{k\to\infty}F(x_k)=\inf_H F$. 
Recall again that either we are in the backtracking step size rule and then $\tilde{x}$ is a minimizer of $F$ or we are in the constant step rule  and then $\tilde{x}$ is arbitrary. In the first case \beqref{eq:O(1/k^2)} and the inequality $\inf_H F=F(\tilde{x})\leq F(x_k)$ for all  $k$ imply the assertion. In the second case we conclude from \beqref{eq:O(1/k^2)} 
that for all $\epsilon>0$ and for all $k$ sufficiently large  
\begin{equation}\label{eq:ApproximateMin}
F(x_k)\leq F(\tilde{x})+\epsilon. 
\end{equation}
Now there are two possibilities: if $\inf_H F=-\infty$, then \beqref{eq:ApproximateMin}, combined with the fact that $\tilde{x}$ was an arbitrary point in $H$, imply that $\lim_{k\to\infty}F(x_k)=-\infty=\inf_H F$, as claimed. Otherwise, we can take $\tilde{x}\in H$ such that $F(\tilde{x})<\inf_H F+\epsilon$ and we conclude that $F(x_k)\leq \inf_H F+2\epsilon$ 
for all $\epsilon>0$ and all $k$ sufficiently large. This and the inequality $\inf_H F\leq F(x_k)$ for all $k$  imply that $\lim_{k\to\infty}F(x_k)=\inf_H F$. 
\end{proof}

\begin{cor}\label{cor:F=O()}
Under the setting of Theorem \bref{thm:FISTA-Perturb}, if there exists a real number $r$ such that $s_k=O(1/k^r)$ for each $k\geq 2$, then 
\begin{equation}\label{eq:RateConvergeFunctionValues}
F(x_{k})-F(\tilde{x})=
\left\{
\begin{array}{lll}
O\left(\displaystyle{\frac{1}{k^{2}}}\right), & \textnormal{if}\,\, r\in(1,\infty),\\
O\left(\displaystyle{\frac{\ln(k)}{k^{2}}}\right), & \textnormal{if}\,\,r=1,\\
O\left(\displaystyle{\frac{1}{k^{1+r}}}\right), & \textnormal{if}\,\,r\in [-1,1).
\end{array}
\right.
\end{equation}
\end{cor}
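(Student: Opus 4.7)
The plan is to use the master inequality \beqref{eq:BoundFunctionValues} from Theorem \bref{thm:FISTA-Perturb} as a black box and then reduce the corollary to a routine estimate of the partial sum $S_k:=\sum_{j=2}^{k+1}s_j$ under the hypothesis $s_j=O(1/j^r)$. Since the constant terms in the numerator of \beqref{eq:BoundFunctionValues} other than $S_k$ contribute a fixed $O(1)$ bound, the entire right-hand side is controlled by $O(1/(k+1)^2)+O(S_k/(k+1)^2)$, so the rate in \beqref{eq:RateConvergeFunctionValues} is determined by how $S_k/k^2$ behaves.

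First, I would pick a constant $c>0$ with $s_j\leq c/j^r$ for all $j\geq 2$ (existence of such a $c$ is exactly the meaning of $s_j=O(1/j^r)$; one may need to absorb finitely many terms into the constant). Then I would split into the three cases suggested by the statement.

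For $r>1$, the $p$-series comparison gives $S_k\leq c\sum_{j=2}^{\infty}1/j^r<\infty$, so $S_k=O(1)$ and hence $S_k/(k+1)^2=O(1/k^2)$, which combined with the other terms yields the first line of \beqref{eq:RateConvergeFunctionValues}. For $r=1$, the standard harmonic estimate $\sum_{j=2}^{k+1}1/j\leq 1+\ln(k+1)$ gives $S_k=O(\ln k)$, hence $S_k/(k+1)^2=O(\ln(k)/k^2)$, yielding the second line. For $r\in[-1,1)$, I would use the integral comparison
\begin{equation*}
\sum_{j=2}^{k+1}\frac{1}{j^r}\leq \int_{1}^{k+1}\frac{dt}{t^r}=\frac{(k+1)^{1-r}-1}{1-r},
\end{equation*}
which gives $S_k=O(k^{1-r})$ and therefore $S_k/(k+1)^2=O(1/k^{1+r})$; since the $O(1/k^2)$ contribution from the fixed terms is absorbed by $O(1/k^{1+r})$ whenever $1+r\leq 2$, this yields the third line. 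Replacing $k+1$ by $k$ on the left only affects the implicit constant, so the stated $O$-form involves $k$ rather than $k+1$.

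No step is genuinely hard here; the only mild obstacle is keeping the two additive contributions to the numerator of \beqref{eq:BoundFunctionValues} straight so that the dominant one is correctly identified in each regime (in particular noting that for $r\in[-1,1)$ the rate $O(1/k^{1+r})$ dominates $O(1/k^2)$, whereas for $r\geq 1$ the constant part $O(1/k^2)$ is what keeps the first line sharp). Beyond that, it is a direct application of Theorem \bref{thm:FISTA-Perturb} together with elementary estimates on truncated $p$-series.
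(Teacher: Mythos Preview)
Your approach is exactly the one the paper takes: feed the assumption $s_j\le c/j^r$ into the bound \beqref{eq:BoundFunctionValues} and estimate the partial sum $\sum_{j=2}^{k+1}s_j$ by integral comparison, splitting into the three regimes. The paper does precisely this (with the same harmonic/$p$-series estimates), so there is nothing to add conceptually.

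One small technical slip: the inequality you wrote for the third case,
\[
\sum_{j=2}^{k+1}\frac{1}{j^{r}}\le \int_{1}^{k+1}\frac{dt}{t^{r}},
\]
is only valid when $t\mapsto t^{-r}$ is nonincreasing, i.e.\ when $r\ge 0$. For $r\in[-1,0]$ the integrand is nondecreasing, so the correct comparison is $j^{-r}\le \int_{j}^{j+1}t^{-r}\,dt$, giving $\sum_{j=2}^{k+1}j^{-r}\le \int_{2}^{k+2}t^{-r}\,dt=\dfrac{(k+2)^{1-r}-2^{1-r}}{1-r}$. This is still $O(k^{1-r})$, so your conclusion stands, but the paper is careful to separate $r\in(0,1)$ from $r\in[-1,0]$ for exactly this reason.
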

\begin{proof}
By our assumption there exists $\tilde{c}>0$ such that $s_j\leq \tilde{c}/j^r$ for all $j\in\N$. If $r\in (0,1)$ or $r>1$, then 
\begin{equation*}
\sum_{j=2}^{k+1}s_j\leq\tilde{c}\sum_{j=2}^{k+1}j^{-r}<\tilde{c}\sum_{j=1}^{k-1}\int_j^{j+1}u^{-r}du=\frac{\tilde{c}(k^{1-r}-1)}{1-r}. 
\end{equation*}
If $r=1$, then 
\begin{equation*} 
\sum_{j=2}^{k+1}s_j<\tilde{c}\sum_{j=1}^{k-1}\int_j^{j+1}u^{-1}du=\tilde{c}\ln(k). 
\end{equation*}
If $r\in [-1,0]$, then 
\begin{equation*}
\sum_{j=2}^{k+1}s_j\leq \tilde{c}\sum_{j=2}^{k+1}\int_j^{j+1}u^{-r}du=\frac{\tilde{c}((k+1)^{1-r}-2^{1-r})}{1-r}.
\end{equation*}
By taking into account the above expressions and \beqref{eq:O(1/k^2)} (including the constant terms in the  numerator of \beqref{eq:O(1/k^2)}) we obtain the assertion. 
\end{proof}

\begin{cor}\label{cor:Decay e_k}
Under the assumptions of Theorem \bref{thm:FISTA-Perturb} without assuming \beqref{eq:LimSum0}, we have
\begin{equation}\label{eq:e_kEstimate}
\|e_k\|\leq \frac{s_k}{s_1k^2},\quad \forall\,k\in\N. 
\end{equation}
If, in addition, the following four conditions hold:
\begin{enumerate}[(I)]
\item\label{item:LimF(z_k)=infty} $\lim_{k\to\infty}F(z_k)=\infty$ if and only if $(z_k)_{k=1}^{\infty}$ is an arbitrary sequence in $H$ satisfying $\lim_{k\to\infty}\|z_k\|=\infty$, 
\item\label{item:c} There exists $c\in [0,1]$ such that for all $k\in\N$, $k\geq 2$, if $e_k\neq 0$ and \beqref{eq:e_k} holds, then 
\begin{equation}\label{eq:e_kEquality}
\|e_k\|\geq \frac{c s_k}{\sigma_k},
\end{equation}
and if $e_k\neq 0$ and \beqref{eq:e_k'} holds, then 
\begin{equation}\label{eq:e_kEquality'}
\|e_k\|\geq \frac{c s_k}{\sigma_k'},
\end{equation}
\item\label{item:F>-infty} $\inf\{F(x): x\in H\}>-\infty$, 
\item\label{item:sum s_j bound}
\begin{equation}\label{eq:sup}
\sup\left\{\frac{\sum_{j=2}^{k+1}s_j}{(k+1)^2}: k\in\N\right\}<\infty,
\end{equation}
\end{enumerate}
then, for each $k\geq 2$, either $e_k=0$ or 
\begin{equation}\label{eq:e_k=Theta(s_k/k^2)}
\|e_k\|=\Theta\left(\frac{s_k}{k^2}\right),
\end{equation}
i.e., either $e_k=0$, or, up to a multiplicative constant  factor (independent of $k$) from above and below, $\|e_k\|$ behaves as $s_k/k^2$. In particular, if Conditions \beqref{item:LimF(z_k)=infty}-\beqref{item:sum s_j bound} hold and  if there exists $\omega\in\R$ such that for each $k\geq 2$ either $e_k=0$ or 
\begin{equation}\label{eq:ek-mu^k}
\|e_k\|=\Theta\left(\frac{1}{k^{\omega}}\right),\quad \omega\geq 1, 
\end{equation}
then 
\begin{equation}\label{eq:e_kRateConvergeFunctionValues}
F(x_{k})-F(\tilde{x})=
\left\{
\begin{array}{lll}
O\left(\displaystyle{\frac{1}{k^{2}}}\right), & \omega\in(3,\infty)\\
O\left(\displaystyle{\frac{\ln (k)}{k^{2}}}\right), & \omega=3,\\
O\left(\displaystyle{\frac{1}{k^{\omega-1}}}\right), & \omega\in [1,3).
\end{array}
\right.
\end{equation}
\end{cor}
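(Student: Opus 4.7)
The plan is to prove \beqref{eq:e_kEstimate} by a direct lower estimate on $\sigma_k$, then obtain the matching lower bound on $\|e_k\|$ by estimating $\sigma_k$ from above under the additional hypotheses (I)--(IV), and finally deduce \beqref{eq:e_kRateConvergeFunctionValues} by combining \beqref{eq:e_k=Theta(s_k/k^2)} with the hypothesis \beqref{eq:ek-mu^k} and invoking Corollary \bref{cor:F=O()}.

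For the upper bound \beqref{eq:e_kEstimate}: every summand inside the bracket of \beqref{eq:sigma_k} is nonnegative, so $\sigma_k \geq 2t_k^2 \cdot 4s_1 = 8 t_k^2 s_1$. Combining this with the elementary bound $t_k \geq k/2$ (which is just \beqref{eq:delta_k geq k} after the reindexing \beqref{eq:t_k=delta_k+1}) yields $\sigma_k \geq 2 s_1 k^2$. Plugging this into the first case of \beqref{eq:e_k} gives $\|e_k\| \leq s_k/(s_1 k^2)$; in the second case of \beqref{eq:e_k} one has $e_k=0$ and the inequality is trivial. The argument with $\sigma_k'$ in \beqref{eq:e_k'} is identical, because the $2s_1$ term already contributes a multiple of $k^2$ once combined with $t_k^2$.

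The main obstacle is the complementary upper bound $\sigma_k \leq C k^2$ for some absolute constant $C$, since then (II) immediately gives $\|e_k\| \geq cs_k/(Ck^2)$, which together with \beqref{eq:e_kEstimate} produces \beqref{eq:e_k=Theta(s_k/k^2)}. To bound $\sigma_k$ from above it suffices to show that the bracket in \beqref{eq:sigma_k} is uniformly bounded in $k$: $t_k = O(k)$ follows from an induction on \beqref{eq:t_k+1}, $1/L_k \leq 1/\rho$ by \beqref{eq:tau_rho}, and the term $4s_1 + (1/t_k)\mu$ is trivially controlled. The delicate quantities are $\|p_{L_k}(y_k)\|$, $\|p_{L_{k-1}}(y_{k-1})\|$, and $\Lambda_k$. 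For these I would first invoke Theorem \bref{thm:FISTA-Perturb} and combine condition (IV) with condition (III) to conclude that the sequence $F(x_k)-F(\tilde{x})$ stays in a bounded interval; condition (I), read in its contrapositive direction, then forces $(x_k)$ to be a bounded sequence (otherwise a subsequence with $\|x_{k_j}\|\to\infty$ would force $F(x_{k_j})\to\infty$), whence $\|p_{L_k}(y_k)\| \leq \|x_k\| + \|e_k\| \leq \|x_k\| + s_1 = O(1)$. The same condition (I) applied to sequences lying in a fixed ball $B[0,R]$ shows that $F$ is bounded above on every bounded set, and (III) supplies a lower bound, so all balls $B[x_k, 2s_1]$ lie in one fixed ball on which the numbers $m_k, M_k$ can be chosen independently of $k$, giving a uniform bound on $\Lambda_k$.

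Finally, for \beqref{eq:e_kRateConvergeFunctionValues}: if $\|e_k\| = \Theta(1/k^\omega)$ at all indices with $e_k \neq 0$, then \beqref{eq:e_k=Theta(s_k/k^2)} forces $s_k = \Theta(k^{2-\omega})$ at those indices, while at indices where $e_k = 0$ one may take $s_k = 0$; hence in every case $s_k = O(1/k^r)$ with $r = \omega-2$. Plugging this into Corollary \bref{cor:F=O()}, the three ranges $\omega > 3$, $\omega = 3$, and $\omega \in [1,3)$ correspond respectively to $r > 1$, $r = 1$, and $r \in [-1,1)$, yielding exactly the rates $O(1/k^2)$, $O(\ln(k)/k^2)$, and $O(1/k^{1+r}) = O(1/k^{\omega-1})$ as in \beqref{eq:e_kRateConvergeFunctionValues}.
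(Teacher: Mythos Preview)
Your proof is correct and follows essentially the same route as the paper's: the upper bound \beqref{eq:e_kEstimate} via $\sigma_k\geq 8t_k^2 s_1$ and $t_k\geq k/2$, the lower bound by showing the bracket in \beqref{eq:sigma_k} is uniformly bounded (using (III)--(IV) with \beqref{eq:BoundFunctionValues} to bound $(F(x_k))$, then (I) to bound $(x_k)$, hence $\|p_{L_k}(y_k)\|$ and $\Lambda_k$), together with $t_k=O(k)$, and finally translating $\|e_k\|=\Theta(1/k^\omega)$ into $s_k=O(1/k^{\omega-2})$ and invoking Corollary~\bref{cor:F=O()}. Your handling of the indices with $e_k=0$ (resetting $s_k=0$ there, which only shrinks the right-hand side of \beqref{eq:BoundFunctionValues} while leaving \beqref{eq:e_k} intact) is in fact more explicit than the paper, which simply writes $s_k=\Theta(1/k^{\omega-2})$ without singling out those indices.
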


\begin{proof}
If \beqref{eq:e_k} holds, then from \beqref{eq:e_k} and \beqref{eq:delta_k geq k} we obtain that 
$\|e_k\|\leq 0.5 s_k/(s_1k^2)$. If \beqref{eq:e_k'} holds, then from \beqref{eq:e_k'} and  \beqref{eq:delta_k geq k} we have  $\|e_k\|\leq s_k/(s_1k^2)$. As a result, in any case \beqref{eq:e_kEstimate} holds. 

Assume now that also the other conditions \beqref{item:LimF(z_k)=infty}-\beqref{item:sum s_j bound} hold. From  \beqref{eq:BoundFunctionValues}, Condition \beqref{item:sum s_j bound}, and Condition \beqref{item:F>-infty} it follows that the sequence $(F(x_k))_{k=1}^{\infty}$ is bounded. This and Condition  \beqref{item:LimF(z_k)=infty} imply that  there exists $M>0$ such that  
\begin{equation}\label{eq:M>}
M>\max\{4s_1,2\mu\},\quad\textnormal{and}\,\,\,\|x_k\|<0.5M\,\, \forall\, k\geq 1,
\end{equation}
where $\mu$ is any upper bound on $\|\tilde{x}\|$. This and the triangle inequality show that  the closed ball $B[x_k,2s_1]$ is contained in $B[0,M]$.  Since $F$ is bounded on bounded sets as implied by Conditions \beqref{item:LimF(z_k)=infty} and \beqref{item:F>-infty}, there exists $\Lambda>0$ such that $\Lambda_k<\Lambda$ for all $k\geq 2$, where $\Lambda_k$ is defined in \beqref{eq:Lambda_{k}}. In addition, the above and  \beqref{eq:x_kConstStep} (or  \beqref{eq:x_kBacktrack}) and \beqref{eq:e_k} (or \beqref{eq:e_k'}) imply that 
\begin{equation}\label{eq:pLk<0.5M+s1}
\|p_{L_k}(y_k)\|<0.5M+s_1.
\end{equation}
 Since $t_k\leq t_1k$ for all $k\geq 1$ as implied by a simple induction, it follows from Condition \beqref{item:c}, from \beqref{eq:M>}, from \beqref{eq:tau_rho}, from \beqref{eq:sigma_k}, from \beqref{eq:pLk<0.5M+s1}, and from \beqref{eq:e_kEquality} that for all $k\geq 2$, either $e_k=0$ or  
\begin{equation}\label{eq:e_kOmega}
\|e_k\|\geq \frac{cs_k}{2t_1^2((1/\rho)\Lambda+1.5M+6s_1)k^2}. 
\end{equation}
It follows from \beqref{eq:e_kEstimate} and \beqref{eq:e_kOmega} that for each $k\geq 2$ either $e_k=0$ or $\|e_k\|=\Theta(s_k/k^2)$, as claimed. Similar things can be said if  \beqref{eq:e_k'} and \beqref {eq:e_kEquality'} hold instead of \beqref{eq:e_k} and \beqref{eq:e_kEquality} respectively. 

Finally, assume that Conditions \beqref{item:LimF(z_k)=infty}-\beqref{item:sum s_j bound} hold and that for each $k\geq 2$ either $e_k=0$ or \beqref{eq:ek-mu^k} hold. From what proved above this  implies \beqref{eq:e_k=Theta(s_k/k^2)}. From this and \beqref{eq:ek-mu^k} we have  $s_k=\Theta(1/k^{\omega-2})$. Because $\omega\geq 1$, elementary computations (as in the proof of Corollary \bref{cor:F=O()})  show that \beqref{eq:sup} is not violated. From Corollary \bref{cor:F=O()} we conclude that \beqref{eq:e_kRateConvergeFunctionValues} holds. 
\end{proof}

\begin{remark}\label{rem:t_1y_2}
\begin{enumerate}[(i)]
\item When $t_2=1$ and $s_{k+1}=0$ for all $k\geq 1$, then \beqref{eq:BoundFunctionValues} implies that 
\begin{equation*}
F(x_{k+1})-F(\tilde{x})\leq\frac{2\tau\|y_2-\tilde{x}\|^2}{(k+1)^2}
\end{equation*}
as in \cite[Relation (4.4)]{BeckTeboulle2009jour}, up to the index value (there the index $k$ starts at 0) and up to the fact that $y_1$ in \cite{BeckTeboulle2009jour} is not assumed to be arbitrary as $y_2$ here but is taken to be $x_0$. 
\item Frequently, the expression 
\begin{equation}
\tau_{12}:=2\tau\left((2/L_1)t_2(t_2-1)(F(x_1)-F(\tilde{x}))+
\|t_2y_2-((t_2-1)x_1+\tilde{x})\|^2\right)
\end{equation}
which appears in the right hand side of \beqref{eq:BoundFunctionValues} can be bounded from above  even when $\tilde{x}$ is unknown (when it is a minimizer). For example, consider the $\ell_1$-$\ell_2$ optimization case, i.e., $F(x)=\|Ax-b\|^2+\|x\|_1$, $x\in H:=\R^n$, $A:\R^n\to \R^{n'}$ is linear, $b\in \R^{n'}$. As explained in Remark \bref{rem:sigma_k'}, we have $\|\tilde{x}\|\leq \|b\|^2$. Since $F(\tilde{x})\geq 0$ we conclude from the triangle inequality and the above discussion that 
\begin{equation*}
\tau_{12}\leq2\tau\left((2/L_1)t_2(t_2-1)F(x_1)+(\|t_2y_2-((t_2-1)x_1\|+\|b\|^2)^2\right).
\end{equation*}
\end{enumerate}
\end{remark}

\begin{remark}\label{rem:DecayLiterature}
Interestingly, despite the difference in the various notions of inexactness and the algorithmic schemes  
considered here and elsewhere in the literature, \beqref{eq:e_kRateConvergeFunctionValues},  
as a function of the decay in the error parameters, 
was obtained in \cite[Theorem 2.1]{JiangSunToh2012jour} and \cite[Theorem 4.4]{VillaSalzoBaldassarreVerri2013} (note: in \cite{VillaSalzoBaldassarreVerri2013} the decay in the error parameters is as $\epsilon_k^2$ because of 
\cite[Definition 2.1]{VillaSalzoBaldassarreVerri2013}).  In \cite[Proposition 2]{SchmidtLe-RouxBach2011prep} and the discussion  after it the error parameters were assumed to decay faster in order to achieve 
\beqref{eq:e_kRateConvergeFunctionValues}, e.g., an $O(1/k^4)$ decay for an $O(1/k^2)$ decay 
in the function values.  The algorithmic schemes described in these works include FISTA as a particular case.  
In \cite[p. 62]{DGN2014jour} a slightly better decay rate is given 
in which boundary cases are allowed. For instance, an $O(1/k^3)$ decay implies  
an $O(1/k^2)$ decay in the function values while we require a $\Theta(1/k^{3+\beta})$ decay 
for arbitrary $\beta>0$. However, as mentioned at the end of Subsection \bref{subsec:Background}, the setting in \cite{DGN2014jour} is somewhat different from our one, especially when a separable function is considered. Interestingly, even in  \cite{MonteiroSvaiter2013jour}, which, as explained in Subsection \bref{subsec:Background}, also  considers a different setting from our one, one can find traces of the decay rate $O(1/k^3)$ of the errors: see \cite[Proposition 5.2(c)]{MonteiroSvaiter2013jour}. 

The above discussion leads us to conjecture that there are some non-obvious relations between the various notions of inexactness. In fact, \cite[Proposition 2.5]{SalzoVilla2012jour} and the discussion after  \cite[Definition  2.1]{SalzoVilla2012jour} shows that our notion of inexactness may be weaker than the one discussed in  \cite{VillaSalzoBaldassarreVerri2013}. On the other hand, because in Corollary \bref{cor:Decay e_k} we impose Conditions  \beqref{item:LimF(z_k)=infty}-\beqref{item:sum s_j bound}, we assume something which is not assumed  in \cite{VillaSalzoBaldassarreVerri2013} and in other works mentioned above (Corollary \bref{cor:Decay e_k} is especially good for the case of superiorization because in this case the user actively controls the errors). We also suspect that there are examples for functions $F$ such that  $\|e_k\|=\Theta(1/k^{\omega})$ for fixed $\omega\in (0,1)$ but $\lim_{k\to\infty}F(x_k)$ does not exist or it exists but is not equal to $F(\tilde{x})$ (assuming $\tilde{x}$ is a minimizer of $F$).
\end{remark}

\section{Superiorization}\label{sec:Superiorization}
\subsection{Background} In Section \bref{sec:Intro} we mentioned briefly the superiorization methodology as one of the reasons for considering inexact versions of FISTA. Motivated by this reason, we re-examine in this section the superiorization  methodology in a thorough way and  show that its scope can be significantly extended. 

First, let us recall again the principles behind the superiorization methodology. Suppose that  our goal is to solve some constrained optimization problem. The full problem might be too  demanding from the computational point of view, but solving only the constrained part (the feasibility problem) can be achieved by an algorithm $\mathcal{A}$ which is rather simple and computationally cheap.  Suppose further  that $\mathcal{A}$ is known to be perturbation resilient, that is, a perturbed version $\mathcal{A}'$ of $\mathcal{A}$ due to error terms also produces solutions to the constrained  part. The superiorization methodology claims that  often we can do something useful with the perturbed version. The ``something  useful'' can be a solution $x'$ (or an approximation solution) to  the feasibility problem which is superior, with  respect to some given cost function $\phi$, to a solution $x$ which would be obtained by considering  the  original algorithm $\mathcal{A}$. In other words, $\phi(x')\leq \phi(x)$, and frequently $\phi(x')$ is much smaller than $\phi(x)$ or at least the computation time needed to find $x'$ will be smaller than the one needed to find $x$. A possible way to approximate $x'$ is by performing in each iteration a feasibility seeking-step and immediately after it a superiorization step aiming at reducing $\phi$ at the current iteration by  playing carefully with the error parameters.  

This heuristic methodology was officially introduced in 2009 in \cite{DavidiHermanCensor2009jour}, but historically, the first works in this research branch are  the 2007 paper  \cite{ButnariuDavidiHermanKazantsev} and the 2008 paper \cite{HermanDavidi2008jour} which did not use the  explicit term ``superiorization''. Since then, the methodology has been investigated in various works, e.g., in  \cite{BauschkeKoch2015jour,CensorDavidiHerman2010jour,CDHST2014,CensorReem2015jour,CensorZaslavski2015jour,Davidi2010PhD,DCSGX2015jour,HGDC2012jour,JinCensorJiang2013proc,JinCensorJiang2016jour,Langthaler2014report,PSCR2010jour,Prommegger2014report}. See also \cite{Censor2015surv,Herman2014surv} for two recent surveys and \cite{CensorSuperiorizationPage}  for a continuously updated online list of works related to the superiorization methodology.  Although the point $x'$ is not a solution to the original constrained  optimization problem, promising experimental results discussed in many of the above mentioned works show the  potential of superiorization in real-world scenarios (for instance, for the analysis of images coming from medical sciences and machine  engineering). 

However, from the theoretical point of view the methodology is still in its initial stages. In  particular, the few mathematical results that exist do not give a full theoretical justification of its success. As a matter of fact, even the potential scope of the methodology has not been fully investigated.   So, on the one hand, some of these works (e.g.,  \cite{Censor2015surv,Davidi2010PhD,Herman2014surv} and \cite{CensorDavidiHerman2010jour}) show that the pioneers of this  methodology have definitely been aware of the generality of the approach, but, on the other hand, a more  careful reading of these works (e.g., Definition 4, Algorithm 5, and Definition 9 in \cite{Censor2015surv}) show that the actual setting which has been considered is not completely general.

To be more concrete, the setting  is a real Hilbert space $H$ (usually finite dimensional); the perturbed iterations should have the form $x_{k+1}=T_k(x_k+\beta_k v_k)$ for some operator  $T_k:H\to H$, where $(v_k)_{k=1}^{\infty}$ is a bounded sequence in $H$ and $(\beta_k)_{k=1}^{\infty}$ is a sequence of nonnegative real numbers satisfying $\sum_{k=1}^{\infty}\beta_k<\infty$; if a convergence notion is discussed, then this notion is  standard: mainly strong convergence (rarely, as in \cite{CensorReem2015jour}, also convergence in the weak topology); 
in several places, e.g., \cite{CensorDavidiHerman2010jour}, there are limitations on the considered functions (e.g., $\phi$ must be convex); the algorithmic operator used at iteration $k+1$ depends only on iteration $k$ (and possibly on some parameters depending on $k$) but not on  previous iterations such as both iterations $k$ and $k-1$, as, e.g., in the perturbed version of FISTA (Section \bref{sec:FISTA-Perturbed}). 

Moreover, in all the works related to superiorization that we have seen, the  perturbation resilience property of the algorithm $\mathcal{A}$ mentioned above has been understood in the feasibility sense and not in other contexts (e.g., in a context of finding a superior solution to an unconstrained minimization problem using a perturbation  resilient algorithm). In other words, the  perturbation resilience property is understood in the sense that both the sequence produced by $\mathcal{A}$ and its  perturbed version  produced by $\mathcal{A}'$ should  converge to a feasible solution. A frequently used version of this criterion is to use a proximity function which measures the distance to the feasible set, and a solution is a point in the space in which this proximity function attains a value not greater than some given error parameter \cite{Censor2015surv,Davidi2010PhD,DCSGX2015jour,GardunoHerman2014jour}. The above is consistent with the fact that often the  superiorization methodology is described as lying between optimization and the  (convex) feasibility problem: see, e.g.,   \cite{Censor2015surv,CensorDavidiHerman2010jour,CensorZaslavski2015jour,PSCR2010jour} 
and \cite[p. 90]{Davidi2010PhD}.
 
\subsection{Our contribution} What is suggested here is to extend the superiorization principle by allowing any type of perturbations, any notion of inexactness, any notion of convergence, and any type of optimization-related problem. More precisely,  given any optimization-related problem in some given space, suppose that we have in our hands a notion of an algorithm $\mathcal{A}$ which produces  a sequence of elements  in the space (they can be thought of as being intermediate solutions to the problem) and a notion of a solution  of the problem (e.g., the  limit of the sequence or some intermediate solution satisfying a certain termination criterion). Moreover, suppose that we have in our hands a notion of inexactness (or a notion of perturbation) of the algorithm, so that instead of considering the sequence produced by $\mathcal{A}$ we  consider a sequence produced by a perturbed algorithm $\mathcal{A}'$. If there is a mathematical result saying that any perturbed  sequence (according to our notion of inexactness) also induces a solution to the original problem, then we can consider the set of all perturbed sequences, with the hope that we will be able to find in this set, by one way or another, a sequence which will lead us to a superior solution. 
 
Roughly speaking, a ``superior solution'' means a solution to the original problem which is  better, according to some criterion (preferably a criterion which is  quantitative and simple to apply),  than ``standard solutions'', namely, solutions which are found using the algorithm $\mathcal{A}$. This additional criterion can be thought of as being ``a notion of superiority''. For example, the notion of superiority can be based on a given cost function $\phi$. In this case, if $(x_k)_k$ is the sequence produced by the original algorithm $\mathcal{A}$ with an induced solution $x$, and if $(x_k')_k$ is the perturbed sequence having  $x'$ as the induced solution, then $x'$ is considered as being a superior solution to $x$ if $\phi(x')\leq \phi(x)$. Alternatively, we can say that $x'$ is superior to $x$ whenever  $\phi(x'_k)\leq \phi(x_k)$ for all $k$ large enough. In both cases strict inequalities are preferred. When the  original problem is to minimize a function $F$ under some constraints, then a possible choice for $\phi$ is to take $\phi:=F$. A third superiority criterion is to consider   several cost  functions $\phi_i,\,i\in I$ for some nonempty set of indices $I$, i.e., $\phi_i(x')\leq \phi_i(x)$ for all $i\in I$, or at least  that $\phi_i(x'_k)\leq \phi_i(x_k)$ for all $i\in I$ and all $k$ large enough. A simple illustration for this third criterion is to take $I=\{1,2\}$, $X=\R^n$, $\phi_1:X\to[0,\infty)$ as the total variation and $\phi_2:X\to [0,\infty)$ as the penalty function $\psi$ suggested in \cite{LevitanHerman1987jour} (see also \cite[p. 166]{GardunoHerman2014jour}).

In practice the perturbed sequence $(x_k')_k$  will be determined by some error terms (which can be vectors, positive parameters, etc.). No matter how we play with these error terms, as long as they satisfy the conditions of the perturbation resilient result that we have in our hands, we obtain a sequence which is guaranteed to  converge in some sense to a solution of the problem. However, by a clever modification of the error terms in each iteration we may steer the sequence to a superior  solution. 

The examples below show  the wide spectrum of this general principle (virtually, any optimization-related problem can be considered), thus significantly extending the scope of the original  superiorization methodology. In order to simplify the notation below, we refer to the 
error terms as $e_k$ when they are vectors and $\epsilon_k$ when they are positive numbers  (although in the original works a different notation was sometimes used).

\begin{expl}
Optimization problem: (accelerated) minimization of a convex function in finite and infinite  dimensional Hilbert spaces.  Notion of convergence: non-asymptotic (function values). A few notions of inexactness:  see the details regarding Devolder et al \cite{DGN2014jour}, Jiang et al. \cite{JiangSunToh2012jour}, Monteiro-Svaiter \cite{MonteiroSvaiter2013jour}, Schmidt et al \cite{SchmidtLe-RouxBach2011prep},  and Villa et al.  \cite{VillaSalzoBaldassarreVerri2013}  in Section \bref{sec:Intro} above; see also \beqref{eq:x_kConstStep} and Theorem \bref{thm:FISTA-Perturb} above.
\end{expl} 

\begin{expl}
Optimization problem: finding zeros of (nonlinear, maximal monotone) operators. Notion of convergence: weak or strong topology. A few notions of inexactness and settings:  
\begin{itemize}
\item Rockafellar \cite{Rockafellar1976jour}:  $\|x_{k+1}-P_k(x_k)\|\leq  \epsilon_k$ or  $\|x_{k+1}-P_k(x_k)\|\leq  \epsilon_k\|x_{k+1}-x_k\|$, where  $\sum_{k=1}^{\infty}\epsilon_k<\infty$ and $P_k=(I+c_kT)^{-1}$ is a proximal operator induced by the operator $T$ whose zeros are sought and $c_k>0$. Setting: a real Hilbert space. 
\item Eckstein \cite{Eckstein1998jour}: $\nabla h(x_k)+e_k\in \nabla h(x_{k+1})+c_kT(x_{k+1})$ for a given Bregman function $h$, where both $\sum_{k=1}^{\infty}\|e_k\|<\infty$ and  $\sum_{k=1}^{\infty}\langle e_k,x_k\rangle$ should exist and be finite. Setting: the Euclidean $\R^n$.
\item Solodov-Svaiter \cite{SolodovSvaiter2001jour}: here the goal is to find a zero of the operator $T$ in a real Hilbert space under a linear constraint. The perturbation appears in several forms: first, in an  $\epsilon_k$-enlargement of $T$; second, in a certain inequality involving $\epsilon_k$, $x_k$,  and other components of the algorithm (including a relative error tolerance $\sigma_k$); third, in an ``halfspace-type projection'' $a_k$ involving $\epsilon_k$. 

\item Reich-Sabach \cite{ReichSabach2010b-jour}: here the goal is to find a common zero of  finitely many operators $A_i$, $i\in \{1,\ldots,N\}$ in a real reflexive Banach space. There are two types of perturbations. 
The first type appears in \cite[(4.1)]{ReichSabach2010b-jour} in four places. The first place is 
in the  equation $e^i_k=\xi^i_k+\frac{1}{\lambda^i_k}\left(\nabla f(y^i_k)-\nabla f(x_k)\right)$ where $\xi^i_k\in A_i(y^i_k)$, the second is in the term  
$w^i_k=\nabla f^*\left(\lambda^i_k e^i_k+\nabla f(x_k)\right)$, the third is in the set 
$C^i_k=\{z\in X: D_f(z,y^i_k)\leq D_f(z,w^i_k)\}$ via $w^i_k$, and the fourth is in the set $C_k:=\cap_{i=1}^N C^i_k$. Here $f$ is a Bregman function, $D_f$ is the induced Bregman divergence (Bregman distance), $\lambda^i_k$ is a positive parameter, $f^*$ is the convex conjugate (Fenchel conjugate) of $f$,  and $y^i_k$ is an additional term satisfying certain relations. 

The  second type appears in \cite[(4.4)]{ReichSabach2010b-jour} in three places. The first place is in the term  $y^i_k=\textnormal{Res}^f_{\lambda^i_kT_i}(x_k+e^i_k)$ where $f$ is a Bregman function, $\lambda^i_k$ is a certain positive parameter, $x_k$ is determined in 
other steps of the algorithm, and $\textnormal{Res}^f_{\lambda^i_kT_i}$ is the resolvent of the operator $\lambda^i_k T_i$ relative to $f$. The second place is in the definition of a certain subset  $C^i_k$ defined in an intermediate step of the algorithm and the perturbation appears as  $x_k+e^i_k$ inside the definition of $x^i_k$. The third place is in the set $C_k:=\cap_{i=1}^N C^i_k$. The error terms $e^i_k$ can be arbitrary (this issue has been clarified recently and will be discussed elsewhere).
\end{itemize}
\end{expl} 

\begin{expl}
Optimization problem: finding fixed points of nonlinear operators in real reflexive Banach spaces. Notion of convergence: weak or strong topology. Some examples: 
\begin{itemize}
\item Reich-Sabach \cite{ReichSabach2010jour}: here the goal is to find a common fixed point of  finitely many operators $T_i$, $i\in \{1,\ldots,N\}$.  The perturbation comes in two forms:  first, as  $y^i_k=T_i(x_k+e^i_k)$ where  $x_k$ is determined in other intermediate steps of the algorithm. Second, the perturbation also appears (as $x_k+e^i_k$) in the definition of a certain subset $C^i_k$ defined in an intermediate step of the algorithm. The error terms $e^i_k$ can be arbitrary (this issue has been clarified recently and will be discussed elsewhere).
\item Butnariu-Reich-Zaslavski \cite{ButnariuReichZaslavski2006conf}: here several notions of  inexactness are used. These conditions are equivalent to saying that four sequences  $(\epsilon_{i,k})_{k=1}^{\infty}$, $i\in \{1,2,3,4\}$ of nonnegative numbers are given and we assume that  their   sum is finite; now, for each $k\in\N$ the iteration $x_{k+1}$ is an arbitrary vector which satisfies the following inequalities: $D_f(T(x_k),x_{k+1})\leq \epsilon_{1,k}$, $\|f'(T(x_k))-f'(x_{k+1})\|\leq \epsilon_{2,k}$,
$\|f'(T(x_k))-f'(x_{k+1})\|\|T(x_k)\|\leq \epsilon_{3,k}$, and $\langle  f'(x_{k+1})-f'(T(x_{k})),x_{k+1}-T(x_k)\rangle\leq \epsilon_{4,k}$. Here $T$ is the operator  whose fixed point are sought and $D_f$ is a Bregman divergence (distance) with respect to a given  Bregman function $f$. 
\end{itemize}
\end{expl}

\begin{expl}
Optimization problem: minimization of a real lower semicontinuous proper convex function $f$. We mention here two examples: 
\begin{itemize}
\item Cominetti \cite{Cominetti1997jour}: The notion of convergence is weak or strong. Notion of inexactness: $x_k-(1/\lambda_k)x_{k-1}\in \partial_{\epsilon_k}f(x_k,r_k)$, where $\partial_{\epsilon_k}$ is an $\epsilon_k$-subdifferential (of $f(\cdot,r_k)$),  $f(\cdot,\cdot)$ is (by abuse of notation) a proper convex lower semicontinuous  approximation of $f$ depending on $x_k$, $\lambda_k>0$, and $r_k>0$ and has the property that its minimal value is finite and tends  to the minimal value of $f$ (whose set of minimizers is assumed to be nonempty) as $r>0$ tends to 0. There are a few conditions on some parameters, e.g., in \cite[Theorem 3.1]{Cominetti1997jour} one requires that  $\lim_{k\to\infty}r_k=0$, $\sum_{k=1}^{\infty}\beta(r_k)\lambda_k=\infty$,   $\sum_{k=1}^{\infty}\epsilon_k\lambda_k<\infty$ or $\lim_{k\to\infty}\epsilon_k/\beta(r_k)=0$, and there are additional conditions; here $\beta(r_k)>0$ is a strong convexity parameter of  $f(\cdot,r_k)$. Setting: a real  Hilbert space.
\item  Zaslavski \cite{Zaslavski2010b-jour}: two notions of convergence are used: in the first  \cite[Theorem  1.2]{Zaslavski2010b-jour} the notion is that the distance of $x_k$ from the solution set is smaller than a given error parameter $\epsilon>0$. The second notion of convergence is convergence in the function values. The notion of inexactness in both cases has the form  $x_k+e_k=\argmin_{x\in\R^n}(f(x)+(1/\lambda_{k-1})B(x,x_{k-1}))$ for some Bregman divergence $B$ and a relaxation parameter $\lambda_{k-1}>0$, $k\in\N$. In addition, it is  assumed that there exists $\delta>0$ depending on $\epsilon$ such that $\|e_k\|\leq \delta$ for each $k\in\N$. Setting: the Euclidean $\R^n$.
\end{itemize}
\end{expl}   

\begin{expl}
Optimization problem: a generalized mixed variational inequality problem in a real Hilbert space   (Xia-Huang \cite{XiaHuang2011jour}). Notion of convergence: weak. Notion of inexactness: based on error terms  whose magnitude should be small enough so that it satisfies a certain 
implicit inequality \cite[Relation (3.3)]{XiaHuang2011jour} which is also determined by some parameters given by the user including a relative error parameter $\sigma$. 
\end{expl}

\begin{expl}
Optimization problem: finding attracting points of an infinite product of countably many  nonexpansive operators $T_i$, $i\in\N$ (Pustylnik-Reich-Zaslavski \cite{PustylnikReichZaslavski2009jour}) in a complete metric space. Notion of convergence: the distance between the iterations and the attracting set $F$ tends to 0. Notion of inexactness: for each $\epsilon>0$ there exists $\delta>0$ and a natural number $n_0$ such that for each ``good'' control $r:\{0,1,2,\ldots\}\to\{0,1,2,\ldots\}$ and each sequence $(x_{k})_{k=0}^{\infty}$  satisfying  $d(x_{k+1},T_{r(k)}x_k)\leq \delta$ for each $k\in\{0,1,2,\ldots\}$, the inequality  $d(x_k,F)<\epsilon$ holds for every $k\geq n_0$. 
\end{expl}

\begin{expl}
Optimization problem: solving the (convex) feasibility problem. Many examples are given in papers dealing with superiorization. Here we mention examples which seem to be less familiar in the superiorization literature. The notion of inexactness in them is weak or strong. 
\begin{itemize}
\item De Pierro-Iusem \cite{DePierroIusem1988}: the perturbation appears as  $x_{k+1}=x_k-\displaystyle{\frac{\alpha_k(g_{i(k)}(x_k)+\epsilon_k)}{\|t_k\|^2}t_k}$ when $g_{i(k)}(x_k)>0$; here $t_k\neq 0$ is a  subgradient of the convex function $g_{i(k)}$ at the point $x_k$ and $\alpha_k$ is a relaxation parameter. It is assumed \cite[Section 3.1]{DePierroIusem1988} that $(\epsilon_k)_{k=1}^{\infty}$ is a monotonically decreasing sequence  of positive parameters which converges to zero and satisfies the condition  $\sum_{k=1}^{\infty}\epsilon_k=\infty$. Setting: the Euclidean $\R^n$. 
\item Censor-Reem \cite{CensorReem2015jour}: the perturbation has the form $P_{\Omega}\left(x_{k}-\displaystyle\lambda_{k}{\frac{g_{i(k)}(x_{k}
)}{\parallel t_{k}\parallel^{2}}}t_{k}+e_{k}\right)$ whenever $g_{i(k)}(x_{k})>0$; 
here $t_k\neq 0$ is a zero-subgradient of the zero-convex function $g_{i(k)}$ at the point $x_k$ and $\lambda_k>0$ is a relaxation parameter, and $P_{\Omega}$ is the best approximation projection on the nonempty closed and convex subset $\Omega$ on which the functions $g_j$, $j\in\N$ are defined. There are additional assumptions, among them \cite[Condition 1]{CensorReem2015jour} saying that for each $k\in\N$ the norm of the error term $e_k$ is bounded above by $\min\{\mu,\epsilon_1\epsilon_2 h_k^2/(2(5\mu+4h_k))\}$, where  $\mu$, $\epsilon_1$, and $\epsilon_2$ are  certain given positive parameters and $h_k$ is a certain nonnegative parameter depending on other parameters (e.g., on $g_{i(k)}(x_k)$).  For a slightly different type of perturbation, see \cite[Subsection 8.1]{CensorReem2015jour}. Setting: a real Hilbert space. 
\end{itemize}
\end{expl} 

\begin{expl}
Optimization problem: any problem which makes use of relaxation parameters (as in many of the 
above examples). These parameters can also be thought of as ``resilience error parameters'' 
since it is guaranteed that the various algorithms converge whenever the  
parameters satisfy a mild condition (e.g., being in the interval $(\epsilon,2-\epsilon)$ for some arbitrary small $\epsilon\in (0,1)$). It is well-known that the relaxation parameters can significantly influence the speed of convergence of the algorithm (for a simple illustration of  this phenomenon, see \cite[Section 7]{CensorReem2015jour}).
\end{expl}

Many additional examples can be found in the following rather partial list of references and in  some of the references therein:  \cite{AlberBurachikIusem1997,ArgyrosMagrenan2015jour,BurachikSvaiter1999jour,BurkeQian1999jour,Combettes2004jour,CombettesWajs2005jour,DemboEisenstatSteihaug1982jour,FriedlanderSchmidt2012jour,Guler1992jour,HeYuan2012jour,HumesSilva2005jour,IusemOtero2001jour,IusemPennanenSvaiter2003,KangKangJung2015jour,KaplanTichatschke2004jour,LWWLW2016jour,MonteiroSvaiter2012jour,ParenteLotitoSolodov2008jour,ReichSabach2009jour,ReichSabach2012col,ReichZaslacski2014jour,SalzoVilla2012jour,SantosSilva2014jour,SolodovSvaiter1999-2jour,SolodovSvaiter1999-1jour,SolodovSvaiter2000jour,Tran-DinhNecoaraDiehl2016jour,Zaslavski2010jour,Zaslavski2011jour,
Zaslavski2013b,Zaslavski2013a,Zaslavski2014jour}. Most of the above mentioned references 
do not mention the word ``superiorization'' explicitly. In fact, 
many of the involved authors had not even been aware of this optimization branch at the time of  preparation of their papers (e.g., because many papers were published years before the superiorization methodology was introduced). However, as said above, one can find in these papers results ensuring the perturbation resilience of certain algorithms. One can also think about other settings in which the superiorization methodology can be used, e.g., when the notion of  convergence  is based on Banach limits, asymptotic centers, convergence in the sense of Mosco, etc., and when the optimization problems are combinatorial or mixed combinatorial (integer  programming) and continuous.

\subsection{Concluding remarks} 
We want to conclude this section with the following words. The previous paragraphs not only extend the horizon of the superiorization methodology, but also pose various challenges. First, to develop a formalism which will handle the above mentioned  examples (or at least an important class of them) in a rigorous way. Second, to provide various real world examples showing the usefulness of the general superiorization methodology. Third, to formulate theoretical and practical sufficient (and/or necessary) conditions which will ensure the convergence (in the considered notion of convergence) of the perturbed sequence to a superior solution. Fourth, to obtain results regarding rates of convergence (e.g., that given some approximation parameter $\epsilon>0$, there exists $k_{\epsilon}\in\N$ such that for all $k_{\epsilon}\leq k\in \N$  iteration number $k$ of the perturbed algorithm is an $\epsilon$-solution of the original problem). Fifth, to obtain theoretical and practical results for multiple  cost functions (this creates an   interesting and new connection between superiorization and feasibility, where this time a feasibility is not the target of the perturbed algorithm, but rather an assumption about the existence of a joint superior solution for several cost functions). Sixth, to present systematic methods for finding good perturbations, e.g, ones which  will ensure that with high probability the perturbed iteration is superior  to the unperturbed one. It is our hope that at least some of these challenges will be addressed and that the discussion of this section will be found to be helpful in optimization theory and beyond.

\section{Appendix}\label{sec:Appendix}
In this appendix we present the proofs of a few auxiliary claims mentioned in the main body of the text. Lemma \bref{lem:LipschitzUpperBound} below was mentioned in Remark \bref{rem:L_kFiniteFx_k}. 

\begin{lem}\label{lem:LipschitzUpperBound}
Given a real Hilbert space $H$ with an inner product $\langle \cdot,\cdot\rangle$ and an induced norm $\|\cdot\|$, suppose that $f:H\to \R$ is continuously differentiable with a Lipschitz constant $L(f')$ of $f'$. Then for all $L\geq L(f')$ and all $x,y\in H$
\begin{equation}\label{eq:fLApp}
f(x)\leq f(y)+\langle f'(y), x-y\rangle+0.5L\|x-y\|^2. 
\end{equation}
\end{lem}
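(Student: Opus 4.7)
The plan is to use the standard one-variable reduction along the segment joining $y$ and $x$, together with the fundamental theorem of calculus, and then exploit the Lipschitz bound on $f'$ via the Cauchy--Schwarz inequality. Concretely, I would introduce the auxiliary real-valued function $\phi:[0,1]\to\R$ given by $\phi(t):=f(y+t(x-y))$. Since $f$ is continuously (Fr\'echet) differentiable on $H$, the chain rule yields that $\phi$ is continuously differentiable on $[0,1]$ with
\begin{equation*}
\phi'(t)=\langle f'(y+t(x-y)),\,x-y\rangle,
\end{equation*}
so the fundamental theorem of calculus for real functions of a real variable gives
\begin{equation*}
f(x)-f(y)=\phi(1)-\phi(0)=\int_0^1\langle f'(y+t(x-y)),\,x-y\rangle\,dt.
\end{equation*}

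Next, I would split the integral by adding and subtracting $f'(y)$ inside the pairing, which produces
\begin{equation*}
f(x)-f(y)-\langle f'(y),x-y\rangle=\int_0^1\langle f'(y+t(x-y))-f'(y),\,x-y\rangle\,dt.
\end{equation*}
Applying the Cauchy--Schwarz inequality (or more precisely the duality pairing bound with the dual norm $\|\cdot\|_*$, which coincides with $\|\cdot\|$ under the Riesz identification in a Hilbert space) to the integrand, and then using the Lipschitz hypothesis $\|f'(u)-f'(v)\|_*\leq L(f')\|u-v\|$ with $u=y+t(x-y)$ and $v=y$, the integrand is pointwise bounded by $L(f')\,t\,\|x-y\|^2$. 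Integrating yields
\begin{equation*}
f(x)-f(y)-\langle f'(y),x-y\rangle\leq L(f')\|x-y\|^2\int_0^1 t\,dt=\tfrac{1}{2}L(f')\|x-y\|^2\leq\tfrac{1}{2}L\|x-y\|^2,
\end{equation*}
where the last inequality uses $L\geq L(f')$. Rearranging gives \eqref{eq:fLApp}.

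There is essentially no serious obstacle; the only point requiring a bit of care in the infinite-dimensional setting is the legitimacy of the chain rule and the fundamental theorem of calculus. Both are standard for Fr\'echet-differentiable mappings into $\R$ composed with the $C^\infty$ curve $t\mapsto y+t(x-y)$, and the continuity of $\phi'$ (which follows from the continuity, indeed Lipschitz continuity, of $f'$) ensures that $\phi$ is absolutely continuous so the Newton--Leibniz formula applies without further justification. The passage from the dual-norm Lipschitz estimate to a Hilbert-space Cauchy--Schwarz bound is immediate once one identifies $H^*$ with $H$.
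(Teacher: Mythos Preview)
Your proof is correct and follows essentially the same approach as the paper: introduce $\phi(t)=f(y+t(x-y))$, apply the chain rule and the fundamental theorem of calculus, split off $\langle f'(y),x-y\rangle$, and bound the remainder via Cauchy--Schwarz and the Lipschitz estimate on $f'$. The only cosmetic difference is that the paper uses $L\geq L(f')$ directly inside the Lipschitz bound, whereas you first obtain $\tfrac12 L(f')\|x-y\|^2$ and then enlarge to $\tfrac12 L\|x-y\|^2$; this is immaterial.
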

\begin{proof}
Fix $x,y\in H$ and let $\phi:[0,1]\to\R$ be defined by $\phi(t)=f(y+t(x-y))$. From the chain rule  $\phi'$  is continuous and $\phi'(t)=\langle f'(y+t(x-y)),x-y\rangle$ for each $t\in  [0,1]$. 
 As a result, the fundamental theorem of calculus, the 
  assumption  that $f'$ is Lipschitz continuous,  the triangle inequality for integrals, and the Cauchy-Schwarz inequality imply that 
\begin{multline*}
f(x)=\phi(1)=\phi(0)+\int_0^1\phi'(t)dt=f(y)+\int_0^1\langle f'(y+t(x-y)),x-y\rangle dt\\
=f(y)+\int_0^1\langle f'(y),x-y\rangle dt+\int_0^1\langle f'(y+t(x-y))-f'(y),x-y\rangle dt\\
\leq f(y)+\langle f'(y),x-y\rangle+\int_0^1|\langle f'(y+t(x-y))-f'(y),x-y\rangle| dt\\
\leq f(y)+\langle f'(y),x-y\rangle+\int_0^1\|f'(y+t(x-y))-f'(y)\|\|x-y\| dt\\
\leq f(y)+\langle f'(y),x-y\rangle+\int_0^1 L \|y+t(x-y)-y\|\|x-y\| dt\\
=f(y)+\langle f'(y),x-y\rangle+L\|x-y\|^2\int_0^1 tdt \\
=f(y)+\langle f'(y),x-y\rangle+0.5L\|x-y\|^2.
\end{multline*}
\end{proof}

Lemma \bref{lem:Optimality} below is needed for proving Lemma \bref{lem:FB_k}. 
\begin{lem}\label{lem:Optimality}
Let $H$ be a real Hilbert space with an inner product $\langle \cdot,\cdot\rangle$ and an induced norm $\|\cdot\|$. For all $y\in H$ and $L>0$, let $u:H\to(-\infty,\infty]$ be defined by  $u(x):=Q_{L}(x,y)$, where $Q_{L}$ is defined in  \beqref{eq:Q_L} with $L$ instead of $L_k$. Then $u$ has a unique minimizer  $p_{L}(y)$ and there exists $\gamma\in \partial g(p_{L}(y))$ such that 
\begin{equation}\label{eq:z_gamma}
f'(y)+\gamma=L(y-p_{L}(y)).
\end{equation}
\end{lem}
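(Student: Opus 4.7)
The plan is to recognize $u$ as the sum of an everywhere-smooth strongly convex quadratic function and the proper, convex, lower semicontinuous function $g$, and then apply Fermat's rule together with the standard subdifferential sum rule. Concretely, write $u = h + g$ where $h(x) := f(y) + \langle f'(y), x - y\rangle + 0.5 L\|x-y\|^2$. Since $L > 0$, the function $h$ is strongly convex with modulus $L$, and it is Fréchet differentiable on all of $H$ with $\nabla h(x) = f'(y) + L(x - y)$. Adding the proper, convex, lower semicontinuous function $g$ preserves these properties (except smoothness), so $u$ is proper, strongly convex, and lower semicontinuous.

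From strong convexity together with properness and lower semicontinuity one obtains existence and uniqueness of a minimizer $p_L(y)$; this is exactly the application of \cite[Corollary 11.15]{BauschkeCombettes2011book} already invoked in Section \bref{sec:FISTA-Perturbed}. Next, Fermat's rule says $p_L(y)$ minimizes $u$ if and only if $0 \in \partial u(p_L(y))$. Because $h$ is finite and continuous on all of $H$ (in particular the qualification condition $\dom(h) \cap \Int(\dom(g)) \neq \emptyset$ is trivially satisfied via $\dom(h) = H$), the Moreau--Rockafellar subdifferential sum rule applies and yields
\begin{equation*}
\partial u(x) = \nabla h(x) + \partial g(x) = f'(y) + L(x - y) + \partial g(x), \quad \forall x \in H.
\end{equation*}

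Specializing at $x = p_L(y)$ and using $0 \in \partial u(p_L(y))$, there exists $\gamma \in \partial g(p_L(y))$ such that
\begin{equation*}
0 = f'(y) + L(p_L(y) - y) + \gamma,
\end{equation*}
which rearranges to \beqref{eq:z_gamma}. The only point requiring a moment of care is the justification of the sum rule in the possibly infinite-dimensional Hilbert space setting, but this is routine since $h$ is everywhere finite-valued and continuous, so no serious obstacle arises. The entire argument is standard convex analysis and should be presented briefly.
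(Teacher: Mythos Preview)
Your proposal is correct and follows essentially the same route as the paper: decompose $u$ as the sum of the smooth quadratic part (the paper calls it $q$, you call it $h$) and $g$, invoke \cite[Corollary 11.15]{BauschkeCombettes2011book} for existence and uniqueness, apply Fermat's rule, and then use the subdifferential sum rule (the paper cites Van Tiel, you invoke Moreau--Rockafellar) together with $\nabla h(p_L(y)) = f'(y) + L(p_L(y)-y)$ to obtain \beqref{eq:z_gamma}. One small wording issue: the qualification condition you state should read $\Int(\dom(h))\cap\dom(g)\neq\emptyset$ (or equivalently that $h$ is continuous at some point of $\dom(g)$), not $\dom(h)\cap\Int(\dom(g))\neq\emptyset$, since $\Int(\dom(g))$ could be empty; but since $\dom(h)=H$ the correct condition is trivially met, so your argument goes through unchanged.
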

\begin{proof}
Since $g$ is proper, convex, and lower semicontinuous, it  follows from the definition of $u$ and $Q_L$ that $u$ is the sum of the smooth convex and quadratic function $q(x):= f(y)+\langle f'(y),  x-y\rangle+0.5L\|x-y\|^2$ and the proper convex lower  semincontinuous function $g$. 
Hence by \cite[Corollary 11.15]{BauschkeCombettes2011book} there exists a unique global minimizer  $p_{L}(y)$ of $u$. By Fermat's rule  \cite[Theorem 16.2, p. 233]{BauschkeCombettes2011book} 
a point $z$ is a (global) minimizer of some proper function $G$ if and only if $0\in \partial G(z)$. Let $G:=u$ and $z:=p_L(y)$. Since $q$ is differentiable, from \cite[Proposition 17.26, p. 251]{BauschkeCombettes2011book} one has $\partial q(x)=\{q'(x)\}$ for each $x\in H$. Since $0\in \partial G(z)$, the sum rule \cite[Theorem 5.38, p. 77]{VanTiel1984} and its proof  imply that $\partial g(z)\neq\emptyset $ and  $\partial G(z)=\partial q(z)+\partial g(z)$. The assertion follows from the above lines because $q'(z)=f'(y)+L(z-y)$. 
\end{proof}

\begin{proof}[{\bf Proof of Lemma \bref{lem:FB_k}}]
Since we can use Lemma \bref{lem:Optimality} in our infinite dimensional setting, the proof is very similar to the proof of  \cite[Lemma 2.3]{BeckTeboulle2009jour}. Indeed, fix $x\in H$. From  the inequality 
$F(p_L(y))\leq Q_L(p_L(y),y)$ we have 
\begin{equation}\label{eq:F>=Q}
F(x)-F(p_L(y))\geq F(x)-Q_L(p_L(y),y).
\end{equation}
Since $f'$ exists, $\partial f(x)=\{f'(x)\}$ for each $x\in H$ as follows from \cite[Proposition 17.26, p. 251]{BauschkeCombettes2011book}. From Lemma \bref{lem:Optimality} we know that the exists $\gamma\in \partial g(p_L(y))$ such that \beqref{eq:z_gamma} holds. The above and the subgradient inequality imply the following inequalities:  
\begin{equation*}
f(x)\geq f(y)+\langle f'(y),x-y\rangle,
\end{equation*}
\begin{equation*}
g(x)\geq g(p_L(y))+\langle \gamma,x-p_L(y)\rangle.
\end{equation*}
After summing these inequalities and recalling that $F=f+g$ we arrive at
\begin{equation}\label{eq:F>=}
F(x)\geq f(y)+\langle f'(y),x-y\rangle+g(p_L(y))+\langle \gamma,x-p_L(y)\rangle.
\end{equation}
From  \beqref{eq:Q_L} one has
\begin{equation}\label{eq:Qp}
Q_L(p_L(y)),y)=f(y)+\langle f'(y),p_L(y)-y\rangle+0.5L\|p_L(y)-y\|^2+g(p_L(y)).
\end{equation}
As a result of \beqref{eq:z_gamma} and \beqref{eq:F>=Q}-\beqref{eq:Qp} we have
\begin{multline*}
F(x)-F(p_L(y))\geq \langle x-p_L(y),f'(y)+\gamma\rangle-0.5L\|p_L(y)-y\|^2\\
=\langle x-p_L(y),L(y-p_{L}(y))\rangle-0.5L\|p_L(y)-y\|^2\\
=\langle y-p_L(y),L(y-p_{L}(y))\rangle+\langle x-y,L(y-p_{L}(y))\rangle-0.5L\|p_L(y)-y\|^2\\
=L\|y-p_L(y)\|^2+L\langle x-y,y-p_{L}(y)\rangle-0.5L\|p_L(y)-y\|^2\\
=0.5L\|p_L(y)-y\|^2+L\langle y-x,p_L(y)-y\rangle
\end{multline*}
as claimed. 
\end{proof}

\section*{Acknowledgments}
We would like to thank FAPESP 2013/19504-9 for supporting this work. Alvaro De Pierro wants to thank CNPq  grant 306030/2014-4. We would like to express our thanks to Jose Yunier Bello Cruz,  Yair Censor, Gabor Herman, Simeon Reich, and Shoham Sabach for helpful discussions. We also thank the referees for considering the paper and for their feedback. 
\bibliographystyle{amsplain}
\bibliography{biblio}

\end{document}